\numberwithin{equation}{subsection}
\newtheorem{theorem}{Theorem}[subsection]
\newtheorem{prop}[theorem]{Proposition}
\newtheorem{lemma}[theorem]{Lemma}
\newtheorem{corollary}[theorem]{Corollary}
\theoremstyle{definition}
\newtheorem{remark}[theorem]{Remark}
\newtheorem*{example}{Example}
\theoremstyle{definition}
\newtheorem{definition}[theorem]{Definition}
\newcommand{\paren}[1]{\left( #1 \right)}
\newcommand{\set}[1]{\left\{#1\right\}}
\newcommand{\abrac}[1]{\left<#1\right>}
\newcommand{\sbrac}[1]{\left[#1 \right]}
\newcommand{\xequal}[1]{\overset{#1}{=\joinrel=}}
\newcommand{\xequals}[1]{\overset{#1}{=\joinrel=\joinrel=}}
\newcommand{\kb}{\mathbbm{k}}
\newcommand{\sym}{\mathrm{Sym}}
\newcommand{\ind}{\mathrm{IND}}
\newcommand{\res}{\mathrm{RES}}
\newcommand{\Zb}{\mathbb{Z}}
\newcommand{\nh}{\mathrm{NH}}
\newcommand{\As}{\mathscr{A}}
\newcommand\scalemath[2]{\scalebox{#1}{\mbox{\ensuremath{\displaystyle #2}}}}
\newcommand{\define}[4]{\expandafter#1\csname#3#4\endcsname{#2{#4}}}
\renewcommand{\mod}{{\mathrm{mod}}}
\renewcommand{\pmod}{{\mathrm{pmod}}}
\newcommand{\fmod}{{\mathrm{fmod}}}
\newenvironment{enua}{\begin{enumerate}[label=\textup{(\alph*)}]
}{\end{enumerate}}
\def\<{\langle}%
\def\>{\rangle}%
\def\-{\text{-}}%
\newcommand{\inv}{^{-1}}%
\newcommand{\tbinomb}[2]{\genfrac{[}{]}{0pt}{}{#1}{#2}}
\title[Infinite-Dimensional Towers]{Infinite-Dimensional Towers and a Categorification of Differential Operators on $\mathbb{A}^1_{\mathbb{Z}[v,v^{- 1}]}$}
\begin{document}
\author{\sc Chun-Ju Lai}
\address{Institute of Mathematics\\ Academia Sinica \\Taipei\\ 106319, Taiwan}
\thanks{Research of the first author was supported in part by
NSTC grants 113-2628-M-001-011 and the National Center of Theoretical Sciences}
\email{cjlai@gate.sinica.edu.tw}

\author{\sc Cailan Li}
\address{Institute of Mathematics\\ Academia Sinica \\Taipei\\ 106319, Taiwan}
\email{cli@gate.sinica.edu.tw}

\begin{abstract}
We introduce axioms for towers of infinite-dimensional algebras such that the corresponding Grothendieck groups of projective and finite-dimensional modules are Hopf dual to each other. This duality gives rise to an action of the Hesienberg double---a generalization of the classical Hesienberg algebra---on the Grothendieck group. We categorify this action and, as an application, construct a categorical realization of quantum differential operators on $\bbA^1_{\Zb[v,v^{-1}]}$.
\end{abstract}

\maketitle

\section{Introduction}
\subsection{}
For $n\geq 0$, let $A_n$ be a unital associative algebra over a field $\kb$. Let $A_n\-\fmod$ and $A_n\-\pmod$ be the categories of finite-dimensional and finitely generated projective $A_n$-modules, respectively, and let $\cK(A)$ and $\cG(A)$ be the corresponding Grothendieck groups. Following Bergeron and Li \cite{BL09}, a \emph{tower of algebras} refers to a direct sum $A = \bigoplus_{n \geq 0} A_n$ of certain finite-dimensional algebras $A_n$ satisfying certain axioms. These axioms ensure that $\cK(A)$ and $\cG(A)$ admit the structure of a graded connected bialgebra (and thus a Hopf algebra) that are Hopf dual to each other with respect to the $\Hom$ form:
\begin{equation*}
\cK(A) \times \cG(A) \to \bbZ,
\quad([P], [M]) \mapsto \dim_\kb \Hom_A(P,M).
\end{equation*}
Given an element $[P]\in \cK(A)$, this Hopf duality allows one to define $r_P^*$, the adjoint of right multiplication by $[P]$.
It further allows one to define the Heisenberg double (see \cite{L94}) of $\cG(A)$ as follows:
\[
\fkh(A) 
:= \< \ell_{[M]}, r^*_{[P]} ~|~ M \in A_n\-\fmod, \ P \in  A_n\-\pmod, \ n\geq 0\> \subseteq \End_\kb (\cG(A)),
\]
where $\ell_M$ is the left multiplication by $[M]$.
The $\fkh(A)$-module $\cG(A)$ is called the {\em Fock space} representation of $\fkh(A)$. In particular, the representation $\fkh(A) \curvearrowright \cG(A) $ recovers the following important examples in representation theory:
\begin{enumerate}
    \item The polynomial representation of the Weyl algebra $W$, i.e.,
\[
\fkh(A) \cong W := \< x, \partial\> \subseteq \End_\kb (\bbZ[x]),
\]
where $A_n := \NC_n$ is the nilCoxeter algebra introduced by Fomin and Stanley (see \cite{FS94}). 
    \item The Heisenberg action on symmetric functions, i.e.,
\[
\fkh(A) \cong \mathsf{Heis} \subseteq \End_\kb(\Sym),
\]
where $A_n := \bbC[S_n]$ is the group algebra of the symmetric group $S_n$.
 Here, $\Sym$ is the ring of symmetric functions in infinitely many variables.
\end{enumerate}
These Fock space representations were categorified by Khovanov in \cite{K01,Kho14}, and also Savage and Yacobi in \cite{SY15}, in the sense that the relations satisfied by the endomorphisms $\ell_M$ and $r^*_P$ acting on $\cG(A) $ lift to corresponding relations between certain functors acting on $ \bigoplus_{n \geq 0} A_n\-\fmod$. For example, the relations $[\partial, x] = 1$ in $W$ and $[h_i^*,e_j] = e_{j-1}h^*_{i-1} $ in $\mathsf{Heis}$ translate to, respectively: 
\[
\Res\Ind \cong \Ind\Res \oplus \Id,
\quad
\Res_{H_i}\Ind_{E_j} \cong \Ind_{E_j}\Res_{H_i} \oplus \Ind_{E_{j-1}}\Res_{H_{i-1}}, 
\]
where $\Res=\Res_{\NC_n}^{\NC_{n+1}}$, \vspace{0.5ex} $\Ind=\Ind_{\NC_n}^{\NC_{n+1}}$ and $H_j, E_j$ are the trivial and sign representations of $S_j$, respectively (see \cite{SY15} for the definition of $\Ind_{E_j}$, etc). In particular, given a tower $A$ of finite-dimensional algebras  satisfying certain conditions, Savage and Yacobi developed a framework for the categorification of the Fock space of $\fkh(A)$ that unifies both cases above.

\subsection{}
In this paper, we develop a framework for the categorification of the Fock space of $\fkh(\As)$, where the tower $\As=\bigoplus_{n \ge 0} A_n$ is built from infinite-dimensional algebras $A_n$. When the algebra $A_n$ is finite-dimensional, there is a well known canonical bijection
\begin{equation}\label{eq:bij}
\{ \text{indecomposable projective $A_n$-modules} \} / \cong 
\;\;\longleftrightarrow\;\;
\{ \text{simple $A_n$-modules} \} / \cong,
\end{equation}
which gives the duality between $\cK(A)$ and $\cG(A)$. When the algebra $A_n$ is infinite-dimensional, this correspondence no longer holds. For example, the polynomial algebra $A_n=\kb[x_1, \ldots, x_n]$ has one indecomposable projective module, but infinitely many simples. However, if one endows $A_n$ with the grading $|x_i|=1$, and pass to graded modules, then this issue disappears, as $\kb[x_1, \ldots, x_n]$ only has one graded simple, namely $\kb$. 
Thus, we will work with infinite-dimensional algebras $A_n$ that are \emph{graded}. 

To ensure the corresponding bijection \eqref{eq:bij} in the graded setting, we also require the algebras be \emph{nice} (see \cref{nicedef}). Under additional hypotheses (see \cref{towerdef}), we prove that $\cK(\As)$ and $\cG^f(\As)$ are Hopf-dual algebra–coalgebras and given $P \in A_n\-\pmod$, $M \in A_n\-\fmod$, we define functors
\[  
\I_M: \As\-\fmod\to \As\-\fmod 
\quad\textup{and}\quad 
\TR_P: \As\-\fmod \to \As\-\fmod 
\]
that categorify $r_{[M]}$ and $r_{[P]}^*$, respectively. 

Examples of (infinite-dimensional) nice graded algebras include the polynomial ring $\kb[x_1, \ldots, x_n]$, 
as well as special instances of quantum wreath products $B \wr \cH(n)$ introduced in \cite{LNX24}, 
in the case the base algebra $B$ is infinite-dimensional. 
A notable example is the nilHecke algebra $\nh_n$. Applying our framework to the nilhecke tower $\bigoplus_{n\ge 0}\nh_n$, 
we thereby obtain a categorification of the action of quantum differential operators on $\bbA^1_{\Zb[v,v^{-1}]}$. 
More concretely, 
recall the quantum integer $(a)_v:= \frac{v^a-1}{v-1}$ and quantum binomial coefficients $\tbinom{a}{b}_v := \prod_{i=0}^{b-1}\frac{(a-i)_v}{(b-i)_v}$,
we categorify the action of $\cD$ on $\Zb[v^{\pm 1}][x]$, where
\[
\cD :=
\frac{\Zb[v^{\pm 1}]\langle x, \partial^{(n)} | \, n \ge 0 \rangle}
{(
\partial^{(m)} \partial^{(n)} - \tbinom{m+n}{n}_v\partial^{(m+n)}, 
\quad
\partial^{(m)} x - v^m x \partial^{(m)} - \partial^{(m-1)}
~|~
m,n \geq 0
)}.
\]
Here, $x\in \cD$ acts on $\Zb[v^{\pm 1}][x]$ by right multiplication, while $\partial^{(m)}\in \cD$ acts on $\Zb[v^{\pm 1}][x]$ by 
\[ 
\partial^{(m)}(x^n)=\tbinom{n}{m}_v x^{n-m}.
\]

\section{Infinite-dimensional Towers of Algebras}
\subsection{Graded Simple Modules}
We begin with an analog of \eqref{eq:bij} in the graded setting. 

\begin{lemma}
\label{gradsimplem}
    Any graded simple $S$ of a positively 
    graded algebra $R=\oplus_{i\ge 0} R_i$  is a simple module over $R_0:= R/R^+$.
\end{lemma}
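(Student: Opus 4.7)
The plan is in two steps: first, show that a graded simple $R$-module $S$ must be concentrated in a single degree; then, read off $R_0$-simplicity from that concentration.

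For the first step, I would pick any nonzero homogeneous $m \in S_d$ and note that $Rm \subseteq \bigoplus_{i\ge d}S_i$, since $R$ is positively graded. Graded simplicity forces $Rm = S$, so $S$ is bounded below. Now let $d_0$ be the smallest degree with $S_{d_0} \ne 0$. Positivity of the grading again implies $\bigoplus_{i>d_0} S_i$ is a graded $R$-submodule, and it is proper since $S_{d_0}\ne 0$. Graded simplicity then forces it to vanish, so $S = S_{d_0}$ sits in a single degree.

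For the second step, since $R^+ := \bigoplus_{j>0}R_j$ raises degree strictly, it annihilates $S_{d_0}$, so the $R$-action on $S$ factors through $R/R^+ \cong R_0$. Any $R_0$-submodule $T \subseteq S$ is then automatically an $R$-submodule (as $R^+$ kills everything in sight), and is trivially graded (it lies in the single degree $d_0$). Graded $R$-simplicity then gives $T = 0$ or $T = S$, which is simplicity of $S$ as an $R_0$-module.

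The only mild subtlety is establishing that $S$ is bounded below, which relies on the fact that graded simplicity means every nonzero homogeneous element generates all of $S$. Once concentration in a single degree is in hand, both the well-definedness of the $R_0$-action and its simplicity are essentially immediate.
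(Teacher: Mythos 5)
Your proof is correct and rests on the same idea as the paper's: positivity of the grading plus graded simplicity applied to a submodule missing the bottom degree (the paper phrases this as $R^+S$ being a graded submodule that cannot equal $S$). You additionally spell out what the paper's one-line proof leaves implicit --- that $S$ is bounded below because it is generated by any homogeneous element, hence concentrated in its lowest degree, which is exactly what makes every $R_0$-submodule a graded $R$-submodule and yields ungraded simplicity over $R_0$.
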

\begin{proof}
    Since $R^+S\subseteq S$ is a graded $R$-submodule, either $R^+S=0$ or $R^+S=S$. The latter is impossible, as $R^+$ is positively graded. 
\end{proof}

\begin{definition}
\label{nicedef}
A $\Zb$-graded algebra $R=\oplus_{i\in \Zb} \, R_i$ is called {\em nice} if the following conditions hold:
\begin{enumerate}
        \item There exists a positively (negatively) graded ideal $I\subseteq \oplus_{i> 0} R_i$, such that all graded simples of $R$ are annihilated by $I$.
        \item The quotient $R/I$ is a finite-dimensional algebra.
    \end{enumerate}
\end{definition}

\begin{prop}
\label{niceprop}
    Let $R$ be a nice graded algebra. Then there is a bijection 
    \[ 
    \left\{
    \begin{tabular}{c}
    graded finitely generated \\
    indecomposable projectives of $R$
    \end{tabular}
    \right\}
    /\textnormal{shift } \longleftrightarrow \set{\textnormal{graded simples of }R }/\textnormal{shift }.  
    \]
   Index either set by $J$. If $\kb$ is algebraically closed, then $\dim_\kb (P_i, S_j)=\delta_{ij}$ for $i\in J$ where the indecomposable projective $P_i$ corresponds to the graded simple $S_i$ in the bijection above.
\end{prop}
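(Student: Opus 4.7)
The plan is to reduce to the finite-dimensional graded quotient $\bar R := R/I$, apply the classical graded Wedderburn--Artin theory there, and transport the conclusion back to $R$ via the inclusion $I \subseteq \bigoplus_{i>0} R_i$. By niceness condition (1), every graded simple of $R$ is annihilated by $I$, so the graded simples of $R$ coincide with those of $\bar R$; this is the only role played by condition (1).

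For the finite-dimensional graded algebra $\bar R$, I would first establish that the identity admits a decomposition $1 = \sum_{j \in J} \bar e_j$ into primitive orthogonal idempotents of degree $0$ in $\bar R$. Granting this, the lift from $\bar R$ to $R$ is tautological: since $I \subseteq \bigoplus_{i>0} R_i$ we have $I_0 = 0$ and hence $R_0 = \bar R_0$, so each $\bar e_j$ is already an element $e_j \in R_0 \subseteq R$. Define $P_j := R e_j$, a graded finitely generated projective $R$-module.

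To see $P_j$ is graded indecomposable, suppose $P_j = Q_1 \oplus Q_2$ as graded $R$-modules. Reducing modulo $I$ gives $\bar R \bar e_j \cong Q_1/IQ_1 \oplus Q_2/IQ_2$, and since $\bar e_j$ is primitive in $\bar R$ one summand must vanish, say $Q_2 = IQ_2$. Decomposing $e_j = q_1 + q_2$ through the graded splitting, with both $q_i$ homogeneous of degree $0$, shows $Q_2 = Rq_2$. Then $q_2 \in IQ_2 = Iq_2$ means $q_2 = cq_2$ for some $c \in I$; taking degree-$0$ components and using $I_0 = 0$ forces $q_2 = 0$, hence $Q_2 = 0$. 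The converse direction --- that every graded f.g.\ indecomposable projective of $R$ is isomorphic to some $P_j$ up to graded shift --- follows from the same idempotent analysis applied to $P/IP$ together with uniqueness of graded projective covers over $\bar R$.

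For the Hom computation, $\Hom_R(P_j, S_i) = e_j S_i = \bar e_j S_i$, using $I \cdot S_i = 0$. When $\kb$ is algebraically closed, the graded semisimple quotient $\bar R/\mathrm{rad}(\bar R)$ decomposes as a direct sum of shift-graded matrix algebras over $\kb$; a graded Schur calculation inside this quotient shows $\bar e_j S_i$ is one-dimensional precisely when $\bar e_j$ lies in the matrix block corresponding to the simple $S_i$ (that is, $j = i$ under the natural labeling of $J$), and zero otherwise, yielding $\dim_\kb (P_j, S_i) = \delta_{ij}$. I anticipate the main obstacle to be the graded Wedderburn input: verifying that the identity of any finite-dimensional $\Zb$-graded algebra over $\kb$ admits a primitive orthogonal idempotent decomposition in degree $0$. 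This is what simultaneously makes the degree-$0$ lifting from $\bar R$ to $R$ trivial and allows the Nakayama-style indecomposability argument above to close.
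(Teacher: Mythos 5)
Your forward construction is sound and close in spirit to the paper's: both proofs identify graded simples of $R$ with simples of the finite-dimensional quotient $\bar R = R/I$, observe that $I_0=0$ so a degree-zero idempotent $\bar e_j$ of $\bar R$ is literally an idempotent $e_j\in R_0$, and set $P_j=Re_j$. Your degree argument for graded indecomposability of $P_j$ (reduce mod $I$, write $e_j=q_1+q_2$ in degree $0$, and use $q_2=cq_2$ with $c\in I\subseteq\oplus_{i>0}R_i$ to force $q_2=0$) is correct and is a pleasant elementary substitute for the Nakayama-type reasoning in that half. The unproved ``main obstacle'' you flag (a degree-zero primitive orthogonal idempotent decomposition of a finite-dimensional $\Zb$-graded algebra) is standard and the paper is equally terse about it, so I would not count that against you.

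The genuine gap is the exhaustion step: showing that \emph{every} graded finitely generated indecomposable projective $P$ of $R$ is, up to shift, one of your $P_j$. You dismiss this with ``the same idempotent analysis applied to $P/IP$ together with uniqueness of graded projective covers over $\bar R$,'' but uniqueness of projective covers over the quotient $\bar R$ compares $\bar R$-modules and says nothing about $P$ and $P_j$ as $R$-modules. To pass from a decomposition of $P/IP$ (or of the top of $P$) back to $P$ itself you need either to lift idempotents of $\mathrm{End}_{\bar R}(P/IP)$ along the kernel $\Hom_R(P,IP)$ of $\mathrm{End}_R(P)\twoheadrightarrow\mathrm{End}_{\bar R}(P/IP)$ --- which requires knowing that kernel sits in the radical and that idempotents lift --- or, as the paper does, to show that the surjections $P_j\twoheadrightarrow S_j$ and $P\twoheadrightarrow P/J_{gr}(R)P$ are projective covers \emph{over $R$}. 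That is where condition (1) enters a second time (contrary to your claim that its only role is identifying simples): since $I$ kills all graded simples, $I\subseteq J_{gr}(R)$, so graded Nakayama for finitely generated graded modules makes $IP\subseteq J_{gr}(R)P$ superfluous in $P$; this is also what guarantees $P/IP\neq 0$ in the first place, which your sketch silently assumes (positivity of $I$ alone does not give it, since $R$ may have components in negative degrees). With superfluousness in hand one concludes $R$ is graded semiperfect and invokes uniqueness of projective covers over $R$ to get $P\cong P_j(\text{shift})$ --- exactly the route of the paper's proof. As written, your proposal is missing this Nakayama/superfluous-cover (or idempotent-lifting over $R$) ingredient, and without it the bijection is not established.
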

\begin{proof}
    Given a simple $S_b$ of $R$, $(1)$ says that $S_b$ is a simple for $R/I$ which by $(2)$ is a finite-dimensional algebra and thus we have the bijection there. 
    
    Given the projective cover $\psi: \overline{P_b}\twoheadrightarrow S_b$ we have that $P_b$ is a principal indecomposable and so $\overline{P_b}=R/I \cdot \overline{e}$ for some idempotent $\overline{e}\in R/I$. But $I$ is positively graded and thus $\overline{e}=e\in B$ already. Thus we can construct
    \[ 
    P_b:=R e\xrightarrow{\pi} \overline{P_b}\xrightarrow{\psi} S_b.
    \]
   $IP_b\subset J_{gr}(R) P_b$ where $J_{gr}(R)$ is the graded jacobson radical of $R$. Nakayama implies that $J_{gr}(R) P_b$ is a superfluous submodule of $P_b$ and thus $IP_b=\ker \pi$ is a superfluous submodule. $\pi$ is a superfluous epimorphism and since the composition of superfluous epimorphisms is a superfluous epimorphism we see that $P_b$ is the projective cover of $S_b$ as an $R$ module. It follows that $R$ is a semiperfect ring and semiperfect rings afford the bijection above along with the calculation of $\dim_\kb (P_i, S_j)$.
\end{proof}

\subsection{Tower of Algebras}
\label{towersect}
Given a graded $\kb$-algebra $R$, we will, by a slight abuse of notation, write
$R\text{-}\fmod$ and $R\text{-}\pmod$ for the categories of \emph{graded} finite-dimensional $R$-modules and \emph{graded} finitely generated projective $R$-modules, respectively.
We further define
\[
    G_0^f(R):=K_0(R\-\fmod),
    \quad
    K_\oplus(R):=K_0(R\-\pmod).
\]
Let $F:R\-\mod \to R^{op}\-\mod$ be a graded additive functor sending projective modules to flat modules. Then there is a bilinear pairing
\begin{equation}\label{paireq}
\abrac{-,-}_F:K_\oplus(R) \times G_0^f(R) \to \Zb[q, q^{-1}], 
\quad 
\abrac{[P], [M]}:=\mathrm{gdim}_\kb F(P)\otimes_{R} M.
\end{equation}

\begin{remark}
The form $\paren{-,-}_{\Hom}=\dim_\kb \Hom_R(P, M)$ used in \cite{SY15} is a special case of the above as follows. 
Let $R$ be a Frobenius algebra (which is the case for the examples in \cite{SY15}).  Define $F(-):=\Hom_R(-, R)$. 
Since Frobenius algebras are self-injective, $F$ sends projectives to injectives, which coincides with the projectives in $R^{op}$, see \cite[Chapter IV, Proposition 3.7]{SY11}. 
With this $F$, we then have $\abrac{-,-}_F=\paren{-,-}_{\Hom}$ because
   \[ 
   F(P)\otimes_R M=\Hom_R(P, R)\otimes_R M =\Hom_R(P, M), 
   \]
  where the second equality follows from the projectivity of $P$. 
\end{remark}


\begin{definition}
\label{towerdef}
  A graded $\kb$-algebra $\As=\oplus_{n\geq 0} A_n$ is called a \emph{nice tower of algebras} if the following are satisfied for all $m,n$:
  \begin{enumerate}
      \item[(TA1)] $A_n$ is a nice graded algebra. Moreover, $A_0=\kb$.
      \item[(TA2)] There is a unital algebra morphism $A_n\otimes A_m\to A_{n+m}$.
      \item[(TA3)] $A_{m+n}$ is a two-sided projective $A_n\otimes A_m$ module.
      \item[(TA4)] There exists an anti-automorphism $\psi$ of $A_n$.
  \end{enumerate}
\end{definition}

Given a nice tower of algebras $\As$, define
\[
\cG^f(\As):=\bigoplus\limits_{n \geq 0} G_0^f(A_n),
\quad\textup{and}\quad
\cK(\As):=\bigoplus\limits_{n\geq 0} K_\oplus(A_n).
\]
Let $F: A_n\-\mod \to A_n^{op}\-\mod$ be the graded additive functor fixing morphisms and objects, where the right $A_n$-action is given by $m\cdot^{\psi} a :=\psi(a)\cdot m$. 
The functor $F$ sends projectives to projectives, so from \cref{paireq} we have a pairing 
\begin{equation}\label{def:<>psi}
\abrac{-,-}_\psi:K_\oplus(A_n)\times G_0^f(A_n)\to \Zb[q, q^{-1}],
\quad
\abrac{[P],[N]}_\psi:=\mathrm{gdim}_\kb P^\psi \otimes_{A_n} N.
\end{equation} 
\begin{prop}
\label{perfectlem}
Given a nice tower of algebras $\As$, the pairing $\abrac{-,-}:\cK(\As) \times \cG^f(\As) \to \Zb[q, q^{-1}]$ defined by
\begin{equation}\label{def:pairingKGf}
\abrac{[P], [M]}:=\begin{cases}
    \abrac{[P], [M]}_\psi & \textnormal{if }P\in A_n\-\pmod \textnormal{ and }M\in A_n\-\fmod; 
    \\
    0 & \textnormal{otherwise.}
\end{cases} 
\end{equation}
is non-degenerate. If additionally, $\kb$ is algebraically closed, then the pairing is perfect. 
\end{prop}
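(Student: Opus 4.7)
The plan is to reduce the non-degeneracy of $\abrac{-,-}$ to a finite matrix computation in each fixed degree $n$ and to exhibit the pairing matrix as a monomial permutation matrix. Since \eqref{def:pairingKGf} is block-diagonal in $n$, it suffices to show each $\abrac{-,-}_\psi : K_\oplus(A_n)\times G_0^f(A_n) \to \Zb[q, q^{-1}]$ is non-degenerate, and perfect when $\kb$ is algebraically closed. By (TA1) and \cref{niceprop}, both $K_\oplus(A_n)$ and $G_0^f(A_n)$ are free of finite rank over $\Zb[q,q^{-1}]$, with bases $\{[P_i]\}_{i\in J}$ and $\{[S_i]\}_{i\in J}$ respectively, where $P_i$ is the graded indecomposable projective cover of the graded simple $S_i$.

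Next I would write $P_i = A_n e_i$ for a primitive idempotent $e_i$ and observe that the map $A_n e_i \to \psi(e_i) A_n$, $x \mapsto \psi(x)$, is an isomorphism of graded right $A_n$-modules, where the source carries the $\psi$-twisted right action from \eqref{def:<>psi}. Composing with the standard identification $\psi(e_i) A_n \otimes_{A_n} S_j \cong \psi(e_i) S_j$ yields a graded $\kb$-linear isomorphism $P_i^\psi \otimes_{A_n} S_j \cong \psi(e_i) S_j$. Since $\psi(e_i)$ is again a primitive idempotent of $A_n$, the graded left module $A_n \psi(e_i)$ is indecomposable projective, hence isomorphic to a grading shift of $P_{\sigma(i)}$ for some permutation $\sigma : J \to J$ (induced by $\psi$) and some integer shift $d_i \in \Zb$.

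I would then invoke the standard identification $\psi(e_i) S_j \cong \Hom^{\bullet}_{A_n}(A_n \psi(e_i), S_j)$, via $\phi \mapsto \phi(\psi(e_i))$, together with \cref{niceprop} to conclude
\[
\abrac{[P_i], [S_j]}_\psi \;=\; q^{\pm d_i}\,\mathrm{gdim}_\kb \Hom^{\bullet}_{A_n}(P_{\sigma(i)}, S_j) \;=\; q^{\pm d_i}\,\delta_{\sigma(i), j}
\]
when $\kb$ is algebraically closed. Hence the matrix of $\abrac{-,-}_\psi$ in the chosen bases is a monomial permutation matrix whose determinant is a unit of $\Zb[q, q^{-1}]$, which proves perfectness. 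For general $\kb$ the same calculation gives entries of the form $q^{\pm d_i}\,\delta_{\sigma(i),j}\,\mathrm{gdim}_\kb \End^{\bullet}_{A_n}(S_j)$, which are still nonzero on the permuted diagonal by the graded Schur lemma, so the matrix is of monomial-permutation shape with nonzero Laurent-polynomial entries, yielding non-degeneracy.

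The main obstacle is keeping the graded bookkeeping consistent: one must ensure that the anti-automorphism $\psi$ in (TA4) is degree-preserving, so that $P_i^\psi$ is a genuine graded right module and the shift $d_i$ is a well-defined integer, and that the identifications $P_i^\psi \otimes_{A_n} S_j \cong \psi(e_i) S_j \cong \Hom^{\bullet}_{A_n}(A_n\psi(e_i), S_j)$ are equalities of graded objects (and hence of $\mathrm{gdim}$'s in $\Zb[q, q^{-1}]$ rather than only in some completion). Modulo this bookkeeping the argument is essentially the classical Schur-type perfectness of the $\Hom$ pairing between projectives and simples over a semiperfect ring, packaged with the right-module twist $(-)^\psi$ playing the role of the duality.
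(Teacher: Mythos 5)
Your proposal is correct and follows essentially the same route as the paper: reduce to a fixed $n$, write each indecomposable projective as $A_ne_i$, use $(A_ne_i)^\psi\cong\psi(e_i)A_n$ to rewrite $\abrac{[P_i],[S_j]}_\psi$ as $\gdim_\kb\Hom_{A_n}(A_n\psi(e_i),S_j)$, and conclude via \cref{niceprop} that the projectives and simples form (up to the permutation and shifts induced by $\psi$) dual bases. Your extra bookkeeping --- making the permutation $\sigma$, the shifts $q^{\pm d_i}$, and the graded Schur-lemma argument for general $\kb$ explicit --- only fleshes out what the paper leaves implicit in ``orthogonal'' and ``dual basis.''
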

\begin{proof}
As $G_0^f(A_n)$ and $K_\oplus(A_m) $ are orthogonal if $n\neq m$, 
we just need to show non-degeneracy for $n=m$. 
Note that $K_\oplus(A_m) $ has a $\Zb[q, q^{-1}]$-basis of indecomposable projectives. 
(TA1) and \cref{niceprop} imply all indecomposable projectives of $A_n$ are projective covers of simples $S_j$, and are principal, i.e. of the form $A_n e_j$ for some idempotent $e_j$. 
As right $A_n$-modules, we have $(A_n e)^\psi\cong\psi(e) A_n$ sending $be\mapsto \psi(e) \psi(b)$. 
Thus
\[ 
\abrac{[A_n e_i], [S_j] }_\psi=\gdim_\kb (\psi(e_i) A_n\otimes_{A_n} S_j)=\gdim_\kb \psi(e_i) S_j=\gdim_\kb \Hom_{A_n}(A_n \psi(e_i) , S_j). 
\]
It follows that $\set{[A_n\psi(e_i)]}$ and $\set{[S_j]}$ are orthogonal and if $\kb$ is algebraically closed they are dual basis from \cref{niceprop}.
\end{proof}
\begin{remark}
    The pairing \eqref{def:pairingKGf} coincides with the bilinear pairing defined for KLR algebras in \cite{KL09}.
\end{remark}
Using (TA3) the functors below induce maps $\sbrac{\ind}, \sbrac{\eta}$ on $\cG^f(\As)$ and $\cK(\As)$. 
\begin{align*}
    &\ind: \As^{\otimes 2}-\mathrm{mod}\to \As-\mathrm{mod} &\eta: \kb-\mathrm{mod}\to \As-\mathrm{mod}\phantom{,}\\
    &\ind|_{A_m\otimes A_n-\mod}=\Ind_{A_m\otimes A_n}^{A_{m+n}} &\eta(V)=V \textnormal{ for }V\in \kb-\mod.
\end{align*}

\begin{lemma}
    $\sbrac{\ind}$ turns $\cG^f(\As)$ and $\cK(\As)$ into unital associative algebras with unit $\sbrac{\eta}$.  
\end{lemma}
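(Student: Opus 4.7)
The plan is to verify three things in turn: well-definedness of $[\ind]$ on each of $\cG^f(\As)$ and $\cK(\As)$, associativity of the resulting product, and that $[\eta]$ is a two-sided unit.

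For well-definedness, I would first invoke (TA3): since $A_{m+n}$ is projective as a right $A_m\otimes A_n$-module, the functor $\ind$ restricted to $(A_m\otimes A_n)\-\mod$ is exact, and it is graded, so it induces a $\Zb[q,q^{-1}]$-linear map on the Grothendieck groups once the relevant subcategory is preserved. Preservation of $\pmod$ is direct: if $P\boxtimes Q$ is projective over $A_m\otimes A_n$, then $\ind(P\boxtimes Q)$ is a summand of a sum of copies of $A_{m+n}$, hence projective over $A_{m+n}$ by the right-projectivity in (TA3). For $\fmod$ I would use (TA1): a graded finite-dimensional $A_n$-module is annihilated by the ideal $I_n$, so the induction factors through the finite-dimensional quotient $A_{m+n}/I_{m+n}$-structure on $A_{m+n}\otimes_{A_m\otimes A_n}(-)$, ensuring the output is again graded finite-dimensional.

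For associativity I would invoke transitivity of induction. Given $L\in A_k\-\mod$, $M\in A_m\-\mod$, $N\in A_n\-\mod$, both iterated inductions
\[
\ind(\ind(L\boxtimes M)\boxtimes N)
\quad\textup{and}\quad
\ind(L\boxtimes \ind(M\boxtimes N))
\]
are naturally isomorphic to $A_{k+m+n}\otimes_{A_k\otimes A_m\otimes A_n}(L\boxtimes M\boxtimes N)$ by the standard associativity of tensor products, provided the two algebra maps $A_k\otimes A_m\otimes A_n\to A_{k+m+n}$ obtained by iterating (TA2) in the two possible bracketings coincide. Passing to Grothendieck classes, this gives $([L][M])[N]=[L]([M][N])$.

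For the unit, since $A_0=\kb$ by (TA1), the algebra morphisms $A_0\otimes A_n\to A_n$ and $A_n\otimes A_0\to A_n$ supplied by (TA2) are forced by unitality to be the canonical identifications. Consequently
\[
\ind(\kb\boxtimes M)\cong A_n\otimes_{A_n}M\cong M
\]
naturally in $M$, and symmetrically on the other side, so $[\eta]\cdot[M]=[M]=[M]\cdot[\eta]$.

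The main obstacle is the associativity coherence of the multiplication maps in (TA2). The axiom only asserts the existence of a unital algebra morphism $A_m\otimes A_n\to A_{m+n}$, but Step 2 requires the two iterated maps $A_k\otimes A_m\otimes A_n\to A_{k+m+n}$ to agree on the nose. In concrete examples (nilHecke and more general graded quantum wreath products) this is automatic from the tower construction, but a fully rigorous treatment should either promote it to an explicit axiom or derive it from the ambient structure. The verification that $\ind$ preserves $\fmod$ in the graded infinite-dimensional setting is the other point where a little care is required, and is where (TA1) is essentially used.
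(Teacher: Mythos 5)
The paper states this lemma without proof (it is treated as standard), so your write-up stands on its own; your overall route --- exactness of induction from (TA3), preservation of the relevant subcategories, transitivity of induction for associativity, and $A_0=\kb$ for the unit --- is the natural one. However, one step is genuinely flawed: the argument that induction preserves graded finite-dimensional modules. A finite-dimensional graded $A_n$-module is not in general annihilated by the ideal $I_n$ (only its composition factors are, so at best some power $I_n^{\ell}$ kills it), and, more importantly, even granting that, there is no reason the induced module $A_{m+n}\otimes_{A_n\otimes A_m}(N\boxtimes M)$ is annihilated by $I_{m+n}$ or by any finite-codimension ideal of $A_{m+n}$: annihilation by an ideal of $A_n\otimes A_m$ does not transfer across the tensor product, so ``the induction factors through $A_{m+n}/I_{m+n}$'' is unjustified. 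The ingredient that actually does the work is finite generation: $A_{m+n}$ should be finitely generated (projective) as a one-sided $A_n\otimes A_m$-module --- this is how (TA3) is meant to be read, and it is what holds in the motivating example, where $\nh_{m+n}$ is free of rank $\binom{m+n}{n}$ over $\nh_m\otimes\nh_n$. Then the induced module is spanned by $h_i\otimes v_j$ with $h_i$ finitely many homogeneous module generators and $v_j$ a basis of $N\boxtimes M$, hence is again graded finite-dimensional; niceness (TA1) is not what makes this step work.

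Two smaller points. Your associativity caveat is well taken: (TA2) as literally stated does not force the two iterated maps $A_k\otimes A_m\otimes A_n\to A_{k+m+n}$ to agree, and this coherence (as in the Bergeron--Li axioms) has to be assumed; flagging it is correct. On the unit, though, your claim that the maps $A_0\otimes A_n\to A_n$ and $A_n\otimes A_0\to A_n$ are ``forced by unitality to be the canonical identifications'' is not right: a unital algebra endomorphism of $A_n$ need not be the identity, so this is likewise an implicit coherence assumption on the structure maps rather than a consequence of (TA1)--(TA2). With those assumptions made explicit and the finite-generation argument substituted for the annihilator argument, your proof goes through.
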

Similarly, the functors below induce maps  $\sbrac{\res}, \sbrac{\epsilon}$ on $\cG^f(\As)$ and $\cK(\As)$.
\begin{align*}
    &\res: \As-\mathrm{mod}\to \As^{\otimes 2}-\mathrm{mod}&\epsilon: \As-\mathrm{mod}\to \kb-\mathrm{mod}\phantom{,}\\
    &\res|_{A_n-\mod}=\bigoplus_{k+\ell=n}\Res_{A_k\otimes A_\ell}^{A_{n}} &\epsilon(V)=\begin{cases}
        V &\textnormal{ if }V\in \kb-\mod, \\
       0 & \textnormal{ otherwise }
    \end{cases}
\end{align*}
\begin{lemma}
    $\sbrac{\res}$ turns $\cG^f(\As)$ and $\cK(\As)$ into counital coassociative coalgebras with counit $\sbrac{\epsilon}$.  
\end{lemma}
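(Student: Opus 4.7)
The plan is to verify coassociativity and the counit property, as the well-definedness of $\sbrac{\res}$ on both Grothendieck groups is asserted analogously to $\sbrac{\ind}$: restriction is exact and preserves underlying graded vector spaces (hence finite-dimensionality), while (TA3) guarantees that $A_n$ is projective as a left $A_k \otimes A_\ell$-module for $k + \ell = n$, so restriction of a finitely generated projective $A_n$-module is a summand of the restriction of $A_n^{\oplus r}$, hence itself finitely generated projective over $A_k \otimes A_\ell$. Along the way I would also invoke K\"unneth-type identifications $K_\oplus(A_k \otimes A_\ell) \cong K_\oplus(A_k) \otimes K_\oplus(A_\ell)$ and $G_0^f(A_k \otimes A_\ell) \cong G_0^f(A_k) \otimes G_0^f(A_\ell)$ in order to view the target of $\sbrac{\res}$ inside $\cG^f(\As) \otimes \cG^f(\As)$ and $\cK(\As) \otimes \cK(\As)$; in the graded nice setting these follow from \cref{niceprop} applied to $A_k \otimes A_\ell$, whose graded simples and indecomposable projectives are external tensor products of those of $A_k$ and $A_\ell$.

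For coassociativity, I would show that the two iterated restrictions $(\res \otimes \Id) \circ \res$ and $(\Id \otimes \res) \circ \res$ are naturally isomorphic as functors from $A_n\-\mod$ to the category of modules over $\bigoplus_{k+\ell+m=n} A_k \otimes A_\ell \otimes A_m$. This reduces to transitivity of restriction along the two chains of unital algebra maps $A_k \otimes A_\ell \otimes A_m \to A_{k+\ell} \otimes A_m \to A_n$ and $A_k \otimes A_\ell \otimes A_m \to A_k \otimes A_{\ell+m} \to A_n$, which agree provided the morphisms of (TA2) satisfy the expected associativity, i.e.\ the two composites $A_k \otimes A_\ell \otimes A_m \to A_n$ coincide. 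This is a standard compatibility implicit in any tower structure and is easily verified in the motivating examples.

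For the counit, the key observation is that $\sbrac{\res}$ sends $\sbrac{M}$ to $\sum_{k+\ell=n} \sbrac{\Res^{A_n}_{A_k \otimes A_\ell}(M)}$, and $\sbrac{\epsilon}$ annihilates every summand except the one with $k = 0$ (resp.\ $\ell = 0$) on the left (resp.\ right). By (TA1), $A_0 = \kb$, so the unital algebra map $A_0 \otimes A_n \to A_n$ is the canonical isomorphism, and hence $\Res^{A_n}_{A_0 \otimes A_n}(M) \cong M$ for any $M \in A_n\-\mod$. Consequently, $(\sbrac{\epsilon} \otimes \Id) \circ \sbrac{\res} = \Id$, and the right counit axiom follows symmetrically.

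The main obstacle I anticipate lies in cleanly setting up the K\"unneth identification of Grothendieck groups in the graded infinite-dimensional nice setting, since $A_k \otimes A_\ell$ is itself an infinite-dimensional graded algebra whose nice structure must be deduced from those of the factors (and may require $\kb$ algebraically closed in order to apply \cref{niceprop} fully). Once this and the associativity of the tower maps from (TA2) are in place, both coassociativity and the counit identity follow from standard transitivity arguments for restriction along nested algebra inclusions.
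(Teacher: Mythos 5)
The paper states this lemma without proof, treating it as routine, and your outline is precisely the standard argument it implicitly relies on: exactness of restriction (with (TA3) guaranteeing preservation of finitely generated projectives), the K\"unneth-type identification of the Grothendieck groups of $A_k\otimes A_\ell$ with tensor products, transitivity of restriction together with the implicit associativity of the (TA2) maps for coassociativity, and $A_0=\kb$ for the counit axiom. Your proposal is correct, and the caveats you flag (niceness of $A_k\otimes A_\ell$ and algebraic closedness of $\kb$ for the K\"unneth step, associativity and unitality of the structure maps of (TA2)) are exactly the hypotheses the paper also leaves implicit.
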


\subsection{The Bialgebra pairing}
For the rest of the article, we abbreviate $\boxtimes=\otimes_\kb$.

\begin{definition}
A pairing $\abrac{-,-}: H\times K\to \Zb[q,q^{-1}]$ between two bialgebras $H$ and $K$ is a bialgebra pairing if the following hold, for $p, p' \in H$, $m,m' \in K$: 
\begin{align}
\label{dual1}
\<pp^\prime,m\> = \<p\boxtimes p', \Delta_K(m)\>,
\\
\label{dual2}
\<p,m m^\prime\> = \<\Delta_H(p), m \boxtimes m'\>,
\\
\label{dual3}
\< 1_H,m\> = \epsilon_K(m),
\quad\< p,1_K\> = \epsilon_H(p).
\end{align}    
where $\abrac{-,-}$ on the RHS of \eqref{dual1} and \eqref{dual2} is given by
\[
\label{tensorpair}
    \abrac{p\boxtimes p^\prime, k \boxtimes k^\prime}=\abrac{p,k}\abrac{p^\prime, k^\prime}.
\]
\end{definition}

\begin{remark}
    Additionally, if $H, K$ are connected Hopf algebras, then a bialgebra pairing is automatically a Hopf pairing. 
\end{remark}

\begin{prop}
    Given a nice tower of algebras $\As$, the pairing $\abrac{-,-}:\cK(\As) \times \cG^f(\As) \to \Zb[q, q^{-1}]$ from \eqref{def:pairingKGf} is a bialgebra pairing. 
\end{prop}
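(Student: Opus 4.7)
The plan is to verify the three bialgebra pairing axioms \eqref{dual1}, \eqref{dual2}, \eqref{dual3} in turn. Axiom \eqref{dual3} is immediate from the block-diagonal definition \eqref{def:pairingKGf}: $\langle 1_H, m\rangle$ vanishes unless $m\in G_0^f(A_0)$, and since (TA1) gives $A_0=\kb$ and forces $1_H=[\kb]$, the pairing reduces to $\gdim_\kb \kb\otimes_\kb M = \gdim_\kb M = \epsilon_K([M])$. The identity $\langle p,1_K\rangle = \epsilon_H(p)$ is handled identically.

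For \eqref{dual1}, take $[P_i]\in K_\oplus(A_{n_i})$ and $[M]\in G_0^f(A_k)$. By the block structure of the pairing and the direct-sum decomposition of $[\res]$, both sides vanish unless $k=n_1+n_2$. Assuming this, the nontrivial identity to verify reads
\[
\gdim_\kb\bigl(A_k\otimes_{A_{n_1}\otimes A_{n_2}}(P_1\boxtimes P_2)\bigr)^\psi\otimes_{A_k}M
=
\gdim_\kb(P_1\boxtimes P_2)^{\psi\otimes\psi}\otimes_{A_{n_1}\otimes A_{n_2}}\Res^{A_k}_{A_{n_1}\otimes A_{n_2}}M.
\]
I would establish this by constructing the natural $\kb$-linear map $p\otimes m\mapsto (1\otimes p)\otimes m$. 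Using the defining relation of the $\psi$-twist on the left-hand side, every element $(a\otimes p)\otimes m$ rewrites as $(1\otimes p)\otimes \psi^{-1}(a)m$, giving surjectivity; matching the residual relations in the two tensor products then produces an inverse, and a direct check confirms the grading is preserved. Axiom \eqref{dual2} is proved by a mirror argument: for $[P]\in K_\oplus(A_k)$ and $[M_i]\in G_0^f(A_{n_i})$ with $k=n_1+n_2$, both sides reduce to $\gdim_\kb P^\psi\otimes_{A_{n_1}\otimes A_{n_2}}(M_1\boxtimes M_2)$---the left-hand side by the standard associativity $P^\psi\otimes_{A_k}\!\bigl(A_k\otimes_{A_{n_1}\otimes A_{n_2}}(M_1\boxtimes M_2)\bigr)\cong P^\psi\otimes_{A_{n_1}\otimes A_{n_2}}(M_1\boxtimes M_2)$, and the right-hand side by unpacking the definition of $[\res]$.

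The main obstacle is a compatibility requirement between the anti-automorphisms and the algebra morphism $\iota\colon A_{n_1}\otimes A_{n_2}\to A_{n_1+n_2}$ supplied by (TA2): tracing the reductions above, matching the residual relations for $b\in A_{n_1}\otimes A_{n_2}$ forces the equation $\psi_{n_1+n_2}\circ\iota = \iota\circ(\psi_{n_1}\otimes\psi_{n_2})$. Without this identity, $P^\psi$ pulled back along $\iota$ fails to agree with $(\Res P)^{\psi\otimes\psi}$, and the Frobenius-style identification breaks down. I would therefore regard this compatibility as implicit in (TA4) (or add it explicitly); once assumed, both \eqref{dual1} and \eqref{dual2} follow from the routine tensor-product manipulations sketched above, with (TA3) ensuring that the relevant induction and restriction functors descend to the Grothendieck groups in the first place.
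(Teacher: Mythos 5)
Your proposal is correct and follows the same overall strategy as the paper (check \eqref{dual1}--\eqref{dual3}, with the real content being that the $\psi$-twist commutes with induction/restriction across the map $\iota\colon A_n\otimes A_m\to A_{n+m}$), but the key step is executed differently. The paper first reduces to indecomposable projectives, which by (TA1) and \cref{niceprop} are principal, $P=A_ne$, $P'=A_me'$; the identity $\bigl(\ind(P\boxtimes P')\bigr)^\psi\cong (P^\psi\boxtimes P'^\psi)\otimes_{A_n\boxtimes A_m}A_{n+m}$ then becomes a one-line idempotent computation \eqref{spliteq}, and \eqref{dual2} is left as ``similar.'' You instead keep $P_1,P_2$ arbitrary and prove the analogous isomorphism by an explicit bimodule map $p\otimes m\mapsto (1\otimes p)\otimes m$, and you spell out the mirror argument for \eqref{dual2}; this is more general (no appeal to niceness/semiperfectness at this point) but your surjectivity/inverse step is only sketched --- the cleanest formulation is the functorial isomorphism $\bigl(A_{n+m}\otimes_{A_n\otimes A_m}(-)\bigr)^\psi\cong(-)^{\psi\otimes\psi}\otimes_{A_n\otimes A_m}A_{n+m}$, valid for all modules once the twist is compatible with $\iota$. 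The compatibility $\psi_{n+m}\circ\iota=\iota\circ(\psi_n\otimes\psi_m)$ that you flag is not a defect of your argument relative to the paper's: the paper uses exactly the same identity tacitly in \eqref{spliteq}, in the step $\bigl(A_{n+m}\otimes_{A_n\boxtimes A_m}(A_n\boxtimes A_m)e\boxtimes e'\bigr)^\psi=(\psi(e)\boxtimes\psi(e'))A_{n+m}$, which requires $\psi(\iota(e\otimes e'))=\iota(\psi(e)\otimes\psi(e'))$. So your observation that this should be read into (TA4) (or added explicitly) is a fair sharpening of the axioms; it holds in the intended examples, e.g.\ for the nilHecke tower, where $\psi$ flips diagrams upside down and $\iota$ is horizontal juxtaposition, so the two operations commute.
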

\begin{proof}    
 First note that \eqref{tensorpair} for $\cK(\As) \times \cG^f(\As)$ can be rephrased as
\[ \abrac{ [P]\boxtimes [P'], [M]\boxtimes[M'] }=\gdim_\kb P^\psi \boxtimes P'^\psi \otimes_{A_n\boxtimes A_m} M\boxtimes M'. 
\]
Now we show that \eqref{dual1} holds. Given $P\in A_n\-\pmod$, $P'\in A_m\-\pmod$, and $M\in A_{n+m}\-\fmod$, we may assume that both $P$ and $P'$ are indecomposable.
It follows from (TA1) and \cref{niceprop} that $P=A_n e, P'=A_m e'$ for some $e$, $e'$. Then,
\begin{equation}\label{spliteq}
\begin{split}
    \Big( 
    A_{n+m} 
    \mathop\otimes\limits_{A_n\boxtimes A_m} 
    P\boxtimes P' 
    \Big)^\psi
    &=
    \Big(
    A_{n+m}
    \mathop\otimes\limits_{A_n\boxtimes A_m} 
    (A_n\boxtimes A_m)e\boxtimes e'
    \Big)^\psi 
    \\
    &=(\psi(e)\boxtimes \psi(e')) A_{n+m}
    = (P^\psi \boxtimes P'{}^\psi) \otimes_{A_n\boxtimes A_m} A_{n+m}.
\end{split}
\end{equation}
Thus,
\begin{align*}
    \abrac{[\ind](P\boxtimes P'), M}
    \xequal{\eqref{spliteq}}
    \gdim_\kb \big(
    (P^\psi \boxtimes P'{}^\psi) 
    \mathop\otimes\limits_{A_n\boxtimes A_m} 
    A_{n+m} \mathop\otimes\limits_{A_{n+m}} M 
    \big) 
    =\abrac{P\boxtimes P', [\res](M)},
\end{align*}
where we have used that the pairing vanishes on $\Res^{A_{n+m}}_{A_k\otimes A_\ell}$ if $k\neq m, \ell\neq n$. 
Equation \eqref{dual2} proceeds similarly. 
Lastly, \eqref{dual3} follows directly from definition.
\end{proof}

\begin{remark}
    If one uses $\abrac{-,-}_{\Hom}$ instead, then the
    verification of \eqref{dual2} 
    does not proceed similarly.
    In order to ensure the bialgebra pairing structure,
    one needs to impose different conditions on $A_n$ (such as being a Frobenius algebra).  
    For details, see \cite[Proposition 3.7]{SY15} and \cite[Proposition 3.3]{BL09}.
\end{remark}

\subsection{Categorification of Fock Space}

For $M \in A_m\-\fmod$, $m \in \mathbb{N}$, define the functor $\I_M: \As\-\fmod\to \As\-\fmod$
by
\[
  \I_M|_{A_n\-\fmod}(N) \;=\;
  \Ind^{A_{n+m}}_{A_n \otimes A_m}\!\bigl(  N \boxtimes  M \bigr)
  \;\in\; A_{m+n}\-\fmod,
  \qquad
  N \in A_n\-\fmod,\; n \in \mathbb{N}.
\]
For each $P \in A_p\-\pmod$, $p \in \mathbb{N}$, define the functor
$\TR_P  : \; \As\-\fmod \to \As\-\fmod
$
by
\[
  \TR_P|_{A_n\-\fmod}(N)
  \;=\; P^\psi \otimes_{A_p} \Res^{A_n}_{A_{\,n-p} \otimes A_p} \! N
  \;\in\; A_{\,n-p}\-\fmod,
  \qquad
  N \in A_n\-\fmod,\; n \in \mathbb{N},
\]
where $\TR_P(N)$ is interpreted as the zero object of $\As\-\fmod$ whenever $n-p<0$. 
\begin{remark}
    The $A_p$-module structure on $\Res^{A_n}_{A_{\,n-p} \otimes A_p}$ comes from the embedding $A_p\hookrightarrow A_{\,n-p} \otimes A_p$, $a\mapsto 1\otimes a$, while the $A_{n-p}$-module structure on $\TR_P(N)$ is given by
\begin{equation} 
a\cdot (x\otimes n) \;:=\; x\otimes \bigl((a\otimes 1)\cdot n\bigr),
\qquad a\in A_{n-p},\; x\in P^{\psi},\; n\in N.
\end{equation}
\end{remark}
If $P=A_pe$ is a principal projective, then 
\[
\TR_{A_pe}(N)=\Hom_{A_p}
(A_p \psi(e), \Res^{A_n}_{A_{n-p}\otimes A_p} \, N  )= \Res_{A_p \psi(e)}(N), 
\]
where $\Res_P$ is the functor defined in \cite[Section 3.4]{SY15}. Both $\I_M$ and $\TR_P$ are exact and thus induce maps $[\I_M], [\TR_P]$ on $\cG^f(\As)$. 
Note that $\I_M$ categorifies the operator of right multiplication by $[M]$.
\begin{lemma}
For $M \in \As\-\fmod$ and $N\in \As\-\fmod$, 
$[\I_M](N)=[\ind](N\boxtimes M).$
\end{lemma}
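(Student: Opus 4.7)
The statement is essentially a definition-chase, so the plan is mostly to unpack notation carefully and invoke additivity/exactness where needed.

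First, I would reduce to the homogeneous case. Since $\As\text{-}\fmod = \bigoplus_m A_m\text{-}\fmod$, every $M \in \As\text{-}\fmod$ decomposes as a direct sum $M = \bigoplus_m M_m$ with $M_m \in A_m\text{-}\fmod$, and similarly for $N$. Because $\I_{(-)}$ is additive in its subscript (as $\Ind$ is an additive functor and $\boxtimes$ distributes over direct sums), and $\ind$ is an additive functor, both sides of the claimed equality split as finite sums over the homogeneous components. It therefore suffices to prove the lemma under the assumption $M \in A_m\text{-}\fmod$ and $N \in A_n\text{-}\fmod$ for some fixed $m, n$.

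Second, I would verify that both sides are actually well-defined classes in $\cG^f(\As)$. The functor $\ind = \Ind^{A_{n+m}}_{A_n\otimes A_m}(-)$ is exact because (TA3) asserts that $A_{n+m}$ is a two-sided projective $A_n\otimes A_m$-module, so in particular flat as a right module. Hence $\ind$ preserves short exact sequences, and likewise $\I_M$ preserves short exact sequences in the $N$-variable (for $M$ fixed), which is what is needed to descend to the Grothendieck group.

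Finally, once we are in the homogeneous case, the equality is literal: by the definition of $\I_M$ on $A_n\text{-}\fmod$ we have
\[
\I_M(N) \;=\; \Ind^{A_{n+m}}_{A_n \otimes A_m}(N \boxtimes M),
\]
while by the definition of $\ind$ applied to $N \boxtimes M \in (A_n \otimes A_m)\text{-}\mod$ we have
\[
\ind(N \boxtimes M) \;=\; \Ind^{A_{n+m}}_{A_n \otimes A_m}(N \boxtimes M).
\]
Passing to classes in $\cG^f(\As)$ yields $[\I_M](N) = [\ind](N \boxtimes M)$, as desired. There is no real obstacle here; the only mildly non-trivial point is noting that (TA3) is what guarantees exactness of induction so that the functorial equality descends to an equality of Grothendieck classes.
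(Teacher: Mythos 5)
Your proof is correct and follows the same route the paper intends: the paper states this lemma without proof precisely because, as you observe, both sides are literally $\Ind^{A_{n+m}}_{A_n\otimes A_m}(N\boxtimes M)$ once one reduces to homogeneous components, with (TA3) guaranteeing exactness so the identity descends to $\cG^f(\As)$. Your careful unpacking adds nothing beyond what the definitions give, which is exactly as it should be here.
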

In contrast, the situation for $\TR_P$ is less clear, but we claim that it categorifies the adjoint of right multiplication by $[P]$. 
\begin{prop} \label{actionlem}
For $P\in A_p\-\pmod$ and $N\in A_n\-\fmod$, 
\[
  r_{[P]}^*\paren{[N]}
  \;=\;
  \begin{cases}
    0, & \text{if } p>n, \\[6pt]
    \bigl[\TR_P(N)],
      & \text{if } p\le n.
  \end{cases}
\]
\end{prop}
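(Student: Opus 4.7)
The strategy is to exploit the defining adjunction identity
\[
\langle [Q], r_{[P]}^*([N])\rangle \;=\; \langle [\ind(Q\boxtimes P)], [N]\rangle, \qquad [Q]\in \cK(\As),
\]
together with the non-degeneracy of the pairing from \cref{perfectlem}: it suffices to verify that the proposed formula for $r_{[P]}^*([N])$ (namely $0$ or $[\TR_P(N)]$) satisfies this identity for every $[Q]$.

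\textbf{Case $p>n$.} If $[Q]\in K_\oplus(A_q)$, then $[\ind(Q\boxtimes P)]$ lives in $K_\oplus(A_{q+p})$ with $q+p\ge p>n$, so the right-hand side vanishes by the orthogonality rule built into \eqref{def:pairingKGf}. Non-degeneracy of $\langle -,-\rangle$ then forces $r_{[P]}^*([N])=0$, matching the claim.

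\textbf{Case $p\le n$.} Since $[\TR_P(N)]\in G_0^f(A_{n-p})$, both sides of the target identity vanish unless $[Q]\in K_\oplus(A_{n-p})$. For such $[Q]$, I would apply the bialgebra pairing identity \eqref{dual1} (already established for $\langle -,-\rangle$ in the preceding proposition) to rewrite the RHS as $\langle [Q]\boxtimes [P], [\res(N)]\rangle$; only the $(n-p,p)$-graded summand of $\res(N)$ contributes, so this becomes
\[
\gdim_\kb (Q^\psi\boxtimes P^\psi)\otimes_{A_{n-p}\boxtimes A_p} \Res^{A_n}_{A_{n-p}\otimes A_p} N.
\]
Then the standard tensor-product associativity
\[
(Q^\psi\boxtimes P^\psi)\otimes_{A_{n-p}\boxtimes A_p} X \;\cong\; Q^\psi\otimes_{A_{n-p}}\bigl(P^\psi\otimes_{A_p} X\bigr),
\]
applied to $X=\Res^{A_n}_{A_{n-p}\otimes A_p} N$, collapses this into $\gdim_\kb Q^\psi\otimes_{A_{n-p}} \TR_P(N) = \langle [Q], [\TR_P(N)]\rangle$, as required.

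\textbf{Expected obstacle.} The main subtlety is that the splitting \eqref{spliteq} underlying \eqref{dual1} in the preceding proposition was only spelled out for principal projectives. To invoke \eqref{dual1} here for an arbitrary $[Q]\in K_\oplus(A_{n-p})$, one should first reduce to the principal case: by (TA1) and \cref{niceprop}, every indecomposable graded projective over $A_{n-p}$ is principal, so $K_\oplus(A_{n-p})$ is spanned over $\Zb[q,q^{-1}]$ by classes of principal projectives, and both sides of the desired identity are additive in $[Q]$. Once this reduction is in place, the rest is a routine manipulation of tensor products and the adjunction between induction and restriction encoded in the bialgebra pairing.
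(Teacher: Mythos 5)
Your proposal is correct and follows essentially the same route as the paper's own proof: pair $r_{[P]}^*([N])$ against classes of projectives over $A_{n-p}$, rewrite $\langle [Q][P],[N]\rangle$ via the bialgebra pairing identity \eqref{dual1}, collapse the tensor product using \cref{tendist}, and conclude by non-degeneracy from \cref{perfectlem}. Your explicit handling of the $p>n$ case and the reduction to principal projectives are fine elaborations of points the paper leaves implicit.
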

\begin{proof}
    We proceed similarly to \cite[Lemma 3.10]{SY15}. For $P_1\in A_{n-p}\-\pmod$,
\[
    \begin{aligned}        
       \<[P_1], r_{[P]}^*\paren{[N] }\> 
       &:= 
       \abrac{[P_1] [P], [N] }
       =\abrac{ P_1\boxtimes P, [\res](N)} 
       \\
       &=\gdim_\kb (P_1^\psi \boxtimes P^\psi) \otimes_{A_{n-p}\boxtimes A_p}
       \Res^{A_n}_{A_{n-p}\otimes A_p}(N) 
       \\
        &\xequals{\cref{tendist}}
        \gdim_\kb 
        P_1^\psi  \otimes_{A_{n-p}} 
        P^\psi\otimes_{A_p} 
        \Res^{A_n}_{A_{n-p}\otimes A_p}(N)
        =\abrac{[P_1] , [\TR_P(N)] }.
    \end{aligned}        
\]
    Since $\abrac{-,-}$ is non-degenerate, the proof concludes.
\end{proof}

\begin{lemma}
\label{tendist}
    Let $S,R$ be $\kb$-algebras, $L$ a right $S$-module, $M$ a right $R$-module and $N$ a left $(S\otimes R)$-module. Then, there is a canonical isomorphism
    $$\paren{L\boxtimes M}\otimes_{S\otimes R} N \cong L\otimes_S \paren{M\otimes_R N}.$$
\end{lemma}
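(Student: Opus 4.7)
The plan is to exhibit mutually inverse $\kb$-linear maps between the two sides, defined on simple tensors by the obvious rule, and then verify well-definedness via the universal properties of the two flavors of tensor products.

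First, I would construct the forward map
$$\phi: (L\boxtimes M)\otimes_{S\otimes R} N \;\longrightarrow\; L\otimes_S (M\otimes_R N),
\qquad (\ell\boxtimes m)\otimes n \;\longmapsto\; \ell\otimes(m\otimes n).$$
To do this rigorously, first view $N$ as a left $R$-module via $r\mapsto 1\otimes r$ so that $M\otimes_R N$ is defined, and endow $M\otimes_R N$ with the left $S$-action $s\cdot(m\otimes n):=m\otimes(s\otimes 1)n$, which is well-defined because the $R$- and $S$-actions on $N$ commute. Then define $\widetilde{\phi}\colon (L\boxtimes M)\times N \to L\otimes_S(M\otimes_R N)$ by $(\ell\boxtimes m, n)\mapsto \ell\otimes(m\otimes n)$ and extend $\kb$-bilinearly. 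To descend to $\otimes_{S\otimes R}$, I would check $(S\otimes R)$-balancedness: the image of $((\ell\boxtimes m)(s\otimes r), n)=(\ell s\boxtimes mr,n)$ is $\ell s\otimes(mr\otimes n)$, while the image of $(\ell\boxtimes m,(s\otimes r)n)$ unfolds as $\ell\otimes(m\otimes(s\otimes 1)(1\otimes r)n)=\ell\otimes s\cdot(m\otimes rn)=\ell s\otimes(mr\otimes n)$ using the $S$- and $R$-balancing; the two agree.

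Next, I would construct the reverse map $\psi$ in two stages. First, the $\kb$-bilinear map $M\times N\to (L\boxtimes M)\otimes_{S\otimes R} N$ sending $(m,n)\mapsto (\ell\boxtimes m)\otimes n$ (for fixed $\ell$) is $R$-balanced because $(\ell\boxtimes mr)\otimes n=(\ell\boxtimes m)(1\otimes r)\otimes n=(\ell\boxtimes m)\otimes(1\otimes r)n$, and it is $S$-equivariant in the sense needed so that the induced map $L\times(M\otimes_R N)\to (L\boxtimes M)\otimes_{S\otimes R} N$, $(\ell,m\otimes n)\mapsto(\ell\boxtimes m)\otimes n$, is $S$-balanced: indeed $(\ell s\boxtimes m)\otimes n=(\ell\boxtimes m)(s\otimes 1)\otimes n=(\ell\boxtimes m)\otimes(s\otimes 1)n$, which equals the image of $(\ell, s\cdot(m\otimes n))$. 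This gives $\psi\colon L\otimes_S(M\otimes_R N)\to (L\boxtimes M)\otimes_{S\otimes R} N$.

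Finally, both composites act as the identity on simple tensors, and since simple tensors generate, $\phi$ and $\psi$ are mutually inverse. The only mildly delicate point—and the one I would be most careful about—is verifying that the left $S$-action on $M\otimes_R N$ defined by $s\cdot(m\otimes n)=m\otimes(s\otimes 1)n$ is well-defined modulo the relations defining $\otimes_R$; this rests on the commuting actions of $S\otimes 1$ and $1\otimes R$ on $N$ coming from $N$ being a genuine $(S\otimes R)$-module. All the other balancing checks are routine manipulations of the defining relations of $\otimes_S$, $\otimes_R$, and $\otimes_{S\otimes R}$.
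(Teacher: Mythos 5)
Your proof is correct. The paper itself states \cref{tendist} without proof, treating it as a standard fact about tensor products, so there is no argument of the paper to compare against; your explicit construction of mutually inverse maps --- checking $(S\otimes R)$-balancedness of the forward map, building the inverse in two stages via $R$- and then $S$-balancedness, and noting that the left $S$-action on $M\otimes_R N$ is well defined because $S\otimes 1$ and $1\otimes R$ act commutingly on $N$ --- is exactly the standard verification one would supply. The only point you gloss over is that defining $\widetilde{\phi}$ on $(L\boxtimes M)\times N$ really requires first defining a $\kb$-trilinear map on $L\times M\times N$ and descending through $L\otimes_\kb M$, but this is routine and does not affect the validity of the argument.
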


\begin{theorem}
For all $M,N\in \As\-\fmod$ and $P,Q\in \As\-\pmod$, we have the following isomorphisms of functors:
\begin{align}
\label{indeq}
    \I_M\circ \I_N&\cong \I_{\ind(N\boxtimes M)},
    \\ 
\label{reseq}
    \TR_P\circ \TR_Q&\cong \TR_{\ind(P\boxtimes Q)}.
\end{align}
\end{theorem}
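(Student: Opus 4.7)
The plan is to establish both isomorphisms by reducing each side to a common iterated tensor product.

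For \eqref{indeq}, apply both sides to an arbitrary $V\in A_v\-\fmod$. Using associativity of tensor products together with (TA2), both $\I_M\circ\I_N(V)$ and $\I_{\ind(N\boxtimes M)}(V)$ naturally identify with
\[
A_{v+n+m}\otimes_{A_v\boxtimes A_n\boxtimes A_m}(V\boxtimes N\boxtimes M),
\]
where the right $A_v\boxtimes A_n\boxtimes A_m$-module structure on $A_{v+n+m}$ is given by either of the factorizations $A_v\boxtimes A_n\boxtimes A_m\to A_v\boxtimes A_{n+m}\to A_{v+n+m}$ and $A_v\boxtimes A_n\boxtimes A_m\to A_{v+n}\boxtimes A_m\to A_{v+n+m}$. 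These agree by associativity of the algebra morphism in (TA2), so this first isomorphism is essentially transitivity of induction.

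For \eqref{reseq}, set $r:=n-p-q$ and fix $N\in A_n\-\fmod$ with $r\ge 0$ (both sides being zero otherwise). The plan is to show that each side identifies with $(P^\psi\boxtimes Q^\psi)\otimes_{A_p\boxtimes A_q}\Res^{A_n}_{A_r\boxtimes A_p\boxtimes A_q}N$. For the LHS,
\[
\TR_P\circ\TR_Q(N)=P^\psi\otimes_{A_p}\bigl(Q^\psi\otimes_{A_q}\Res^{A_n}_{A_{r+p}\boxtimes A_q}N\bigr),
\]
and after restricting the middle factor further along $A_r\boxtimes A_p\boxtimes A_q\hookrightarrow A_{r+p}\boxtimes A_q$, an application of \cref{tendist} (with the $A_r$-action passing through as a bystander) collapses this to the target form. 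For the RHS, the crucial input is the canonical isomorphism
\[
\bigl(A_{p+q}\otimes_{A_p\boxtimes A_q}(P\boxtimes Q)\bigr)^{\!\psi}\otimes_{A_{p+q}}L\;\cong\;(P^\psi\boxtimes Q^\psi)\otimes_{A_p\boxtimes A_q}L,
\]
natural in a left $A_{p+q}$-module $L$, proved by using the $\psi$-twisted right action to write each element in the form $1\otimes x\otimes\ell$ and then matching the defining relations on both sides. Taking $L=\Res^{A_n}_{A_r\boxtimes A_{p+q}}N$ and further restricting along $A_p\boxtimes A_q\hookrightarrow A_{p+q}$ yields the same expression as above.

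The main obstacle is the canonical isomorphism just displayed, which requires that the anti-automorphism $\psi$ be compatible with the algebra morphism of (TA2)---namely, that $\psi_{A_{p+q}}$ restricts to $\psi_{A_p}\otimes\psi_{A_q}$ on the image of $A_p\boxtimes A_q$. This compatibility is transparent in the motivating examples (e.g., for nilHecke algebras $\psi$ is the horizontal diagrammatic flip, which manifestly restricts in this way) but is not explicitly listed in (TA4); it appears to be part of the intended setup. Once this compatibility is in hand, the rest of the argument is a careful bookkeeping of module structures through iterated restriction and repeated use of \cref{tendist}.
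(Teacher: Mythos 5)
Your proposal is correct, and for \eqref{indeq} it coincides with the paper's treatment (transitivity of induction, as in Savage--Yacobi). For \eqref{reseq} the overall computation is also the same --- transitivity of restriction plus \cref{tendist} collapse $\TR_P\circ\TR_Q(N)$ to $(P^\psi\boxtimes Q^\psi)\otimes_{A_p\boxtimes A_q}\Res^{A_n}_{A_r\otimes A_p\otimes A_q}N$ --- but you handle the key bridge differently. The paper first reduces to principal indecomposable projectives $P=A_pe$, $Q=A_qe'$ (using (TA1) and \cref{niceprop}), proves the bimodule identification \eqref{spliteq} by an idempotent computation, and then extends to arbitrary $P,Q$ via additivity $\TR_{P_1\oplus P_2}\cong\TR_{P_1}\oplus\TR_{P_2}$. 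You instead prove the natural isomorphism $\bigl(\ind(P\boxtimes Q)\bigr)^\psi\otimes_{A_{p+q}}L\cong(P^\psi\boxtimes Q^\psi)\otimes_{A_p\boxtimes A_q}L$ directly, by exhibiting the two mutually inverse maps $x\otimes\ell\mapsto(1\otimes x)\otimes\ell$ and $(a\otimes x)\otimes\ell\mapsto x\otimes\psi^{-1}(a)\ell$ and checking well-definedness; this works for arbitrary modules $P,Q$ (projectivity and niceness play no role in this step), so it is slightly more general and avoids the additivity reduction. The price, as you note, is the compatibility $\psi_{A_{p+q}}\circ\iota=\iota\circ(\psi_{A_p}\otimes\psi_{A_q})$ on the image of $A_p\otimes A_q$, which is not spelled out in (TA4); but this is not an extra hypothesis relative to the paper, since \eqref{spliteq} itself already uses it implicitly when identifying $\psi$ applied to the image of $e\boxtimes e'$ with the image of $\psi(e)\boxtimes\psi(e')$, and it holds in the intended examples (for $\nh_n$, $\psi$ is the diagrammatic flip). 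One small caution: writing every element in the form $(1\otimes x)\otimes\ell$ only gives surjectivity of the natural map; the injectivity genuinely requires the explicit inverse and the relation check using anti-multiplicativity of $\psi$ together with the compatibility, so that verification should be carried out rather than merely asserted.
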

\begin{proof}
  The proof of \eqref{indeq} is very similar to \cite[Theorem 3.18]{SY15} but since $\I_M$ places $M$ on the right, the order reverses. 
  For \eqref{reseq}, let $P\in A_p\-\pmod$, $Q\in A_q\-\pmod$ be principal indecomposables, 
  and $L\in A_\ell\-\fmod$.
  We compute that
\begin{equation}\label{eq:TRPTRQ}  
\begin{aligned}
      \TR_P\circ \TR_Q(L)
      & =P^\psi\otimes_{A_p}\Res^{A_{\ell-q}}_{A_{\ell-q-p}\otimes A_p}
        (Q^\psi\otimes_{A_q} \Res^{A_\ell}_{A_{\ell-q}\otimes A_q} L)
      \\
      &= P^\psi\otimes_{A_p} 
        (Q^\psi\otimes_{A_q} \Res^{A_\ell}_{A_{\ell-q-p}\otimes A_p \otimes A_q} L )
      \\
      &\xequals{\cref{tendist}} 
      (P^\psi\boxtimes Q^\psi)     
          \otimes_{A_p\otimes A_q} 
      (\Res^{A_\ell}_{A_{\ell-q-p}\otimes A_p\otimes A_q} L)
      \\
      &=(P^\psi\boxtimes Q^\psi) 
          \otimes_{A_p\otimes A_q} 
          (\Res^{A_{\ell-p-q}\otimes A_{p+q}}_{A_{\ell-q-p}\otimes A_p \otimes A_q}
          \circ
          \Res^{A_\ell}_{A_{\ell-q-p}\otimes A_{p+q}} L )
      \\
      &= (P^\psi\boxtimes Q^\psi) 
      \otimes_{A_p\otimes A_q} A_{p+q}
      \otimes_{A_{p+q}} 
      \Res^{A_\ell}_{A_{\ell}\otimes A_{p+q}} L
      \\
      &\xequals{\eqref{spliteq}} \ind(P\boxtimes Q)^\psi \otimes_{A_{p+q}} \Res^{A_\ell}_{A_{\ell-q-p}\otimes A_{p+q}} L 
      =\TR_{\ind(P\boxtimes Q)}.
  \end{aligned}
\end{equation}
Because $\TR_{P_1\oplus P_2}=\TR_{P_1}\oplus \TR_{P_2}$ it then follows that \cref{reseq} holds for general $P, Q$.
\end{proof}

\section{NilHecke Algebras}
\subsection{Towers of NilHecke Algebras}
Let $\As=\oplus_n \nh_n$, where $\nh_n$ is the graded $\kb$-algebra generated by $\kb[x_1, \ldots, x_n]$ and $\partial_1, \ldots, \partial_{n-1}$ where $|x_i|:=2$ and $|\partial_i|:=-2$, with the following relations:
\begin{align}
\nonumber
\partial_i^{\,2} = 0, 
\quad  
 \partial_i\,\partial_{i+1}\,\partial_i = \partial_{i+1}\,\partial_i\,\partial_{i+1},
 \quad
\partial_i\,\partial_j = \partial_j\,\partial_i,
\quad
 x_i\,\partial_j = \partial_j\,x_i,
 \quad
(\textup{for } |i-j|>1)
\\  
\label{slideq}
x_i\,\partial_i \;-\; \partial_i\,x_{i+1} = 1, 
\qquad \partial_i\,x_i \;-\; x_{i+1}\,\partial_i = 1.
\end{align}
Let us recall some facts about $\nh_n$ from \cite{KL09}.
Using the diagrammatic interpretation of $\nh_n$ therein, 
the map $\nh_n\otimes\nh_m\to \nh_{n+m}$ is given by stacking diagrams next to each other. 
\cite{KL09} shows that $\nh_{n+m}$ is a two sided projective $\nh_n\otimes \nh_m$ module. 
Moreover, there is an anti-automorphism $\psi$ of $\nh_n$ which flips diagrams upside down. The Grothendieck groups can be described as follows. 
\begin{prop}
\begin{enua}
    \item $K_\oplus(\nh_n)\cong\Zb[q^{\pm 1}] [P_n]$,
    where $P_n=\nh_n\psi(e_n)$.
    Here, $e_n$ is the idempotent $e_n := x_1^{n-1}x_2^{n-2}\ldots x_{n-1}\partial_{w_0}$ and $w_0 \in S_n$ is the longest element.
    \item $G_0^f(\nh_n) \cong \Zb[q^{\pm 1}] [L_n]$, 
    where $L_n=\kb[x_1, \ldots, x_n]/(\sym_n^+)$. Moreover, $\dim_\kb L_n=n!$. 
    The action of $x_i\in \nh_n$ on $L_n$ is given by multiplication by $x_i$, while the action of $\partial_i$ is given by the divided difference operator $\partial_i(f)=\frac{f-s_i(f)}{x_i-x_{i+1}}$.
    \end{enua}
\end{prop}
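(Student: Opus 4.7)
The plan is to leverage the classical matrix algebra description $\nh_n \cong \Mat_{n!}(\sym_n)$ from \cite{KL09}, where $\sym_n = \kb[x_1,\ldots,x_n]^{S_n}$ is the (central) subalgebra of symmetric polynomials. Under this identification, the two-sided ideal $I := \nh_n \cdot \sym_n^+$ corresponds to $\Mat_{n!}(\sym_n^+)$, so the quotient $\nh_n/I \cong \Mat_{n!}(\kb)$ is a finite-dimensional simple algebra with a unique simple module of dimension $n!$. This establishes niceness of $\nh_n$ with data $(I, \nh_n/I)$, after which \cref{niceprop} yields a bijection between graded simples and indecomposable graded projectives, both indexed by a singleton up to grading shift.

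For part (b), I would first verify that the prescribed operators satisfy the nilHecke relations on $\kb[x_1,\ldots,x_n]$, a direct classical computation. The action descends to $L_n = \kb[x_1,\ldots,x_n]/(\sym_n^+)$ because $\partial_i$ is $\sym_n$-linear: for $s \in \sym_n$ and $f \in \kb[x_1,\ldots,x_n]$, the invariance $s_i(s) = s$ yields $\partial_i(sf) = s\,\partial_i(f)$, so the defining ideal of $L_n$ is stable under every $\partial_i$. The equality $\dim_\kb L_n = n!$ is the Chevalley--Shephard--Todd theorem for the coinvariant algebra of $S_n$. Since $\sym_n^+$ annihilates $L_n$, the $\nh_n$-action factors through $\nh_n/I \cong \Mat_{n!}(\kb)$, whose unique simple has dimension $n!$; the dimension match forces $L_n$ to be that simple. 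It is therefore the unique graded simple up to shift, giving $G_0^f(\nh_n) \cong \Zb[q^{\pm 1}][L_n]$.

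For part (a), it remains to identify the unique indecomposable graded projective with $P_n = \nh_n\psi(e_n)$. I would check that $e_n = x_1^{n-1}x_2^{n-2}\cdots x_{n-1}\partial_{w_0}$ is an idempotent, which reduces to the identity $\partial_{w_0}(x_1^{n-1}x_2^{n-2}\cdots x_{n-1}) = 1$ obtained by iterating divided differences on the top Schubert polynomial. Its primitivity follows from the classical isomorphism $e_n\nh_n e_n \cong \sym_n$, since the graded polynomial ring $\sym_n$ has no nontrivial graded idempotents. Since the anti-automorphism $\psi$ preserves primitive idempotents, $\psi(e_n)$ is also primitive, and hence $P_n = \nh_n\psi(e_n)$ is an indecomposable projective; by the bijection from \cref{niceprop} it is unique up to shift, yielding $K_\oplus(\nh_n) \cong \Zb[q^{\pm 1}][P_n]$.

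The most technically sensitive point is handling the precise form of the niceness condition: because the grading has $|\partial_i| = -2$, the candidate ideal $I = \nh_n\sym_n^+$ is not contained in strictly positive degrees, so one cannot apply \cref{nicedef} verbatim. I would address this by verifying the conclusion of \cref{niceprop} --- the bijection between graded simples and indecomposable graded projectives, together with the orthogonality of the corresponding basis --- directly via the matrix description and Morita equivalence with $\sym_n$, bypassing literal appeal to the niceness definition if necessary.
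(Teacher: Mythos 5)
Your argument is correct and, in substance, matches the paper: the paper offers no proof of this proposition at all, simply recalling the facts from \cite{KL09}, and your reconstruction via the matrix-algebra description $\nh_n\cong\Mat_{n!}(\sym_n)$ (idempotency of $e_n$ from $\partial_{w_0}(x_1^{n-1}\cdots x_{n-1})=1$, primitivity from $e_n\nh_n e_n\cong\sym_n$, uniqueness of the graded simple via the quotient $\Mat_{n!}(\kb)$ and graded Morita theory) is exactly how those facts are established there. Your closing caveat is well taken and in fact applies to the paper's own assertion that $\nh_n$ is nice with $I=(\sym_n^+)$: since $|\partial_i|=-2$, the two-sided ideal $\nh_n\sym_n^+$ contains elements of degree $\le 0$, so \cref{nicedef} does not apply verbatim; your plan of verifying the conclusions of \cref{niceprop} directly through the Morita equivalence with $\sym_n$ is a sound fix. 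Alternatively, one can satisfy \cref{nicedef} literally by taking $I=\nh_n(\sym_n^+)^N$ with $2N>n(n-1)$: this ideal is concentrated in positive degrees, has finite-dimensional quotient, and annihilates every graded simple (any graded simple of $\nh_n$ is cyclic on a homogeneous generator, hence bounded below in degree, so the central positively graded elements of $\sym_n^+$ must act by zero).
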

Thus, $\nh_n$ is a nice graded algebra with $I=\sym_m^+$. It follows that the nilHecke tower $\oplus_n \nh_n$ is a nice tower of algebras. Denote the symmetrized quantum binomial coefficients by and $\tbinomb{a}{b}_q := \prod_{i=0}^{b-1}\frac{[a-i]_q!}{[b-i]_q}$, where $[a]_q :=\frac{q^{a}-q^{-a}}{q-q^{-1}}$.
\begin{prop}[\cite{KL09}]\label{KLIND}
Let $\widetilde{P}_n=q^{-\frac{n(n-1)}{2}}P_n$. Then,
     $\ind(\widetilde{P}_n\boxtimes \widetilde{P}_m)
    \cong \tbinomb{n+m}{n}_q \widetilde{P}_{n+m}$.
\end{prop}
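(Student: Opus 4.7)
By \cref{perfectlem} and the classification of projectives in \cref{niceprop}, $K_\oplus(\nh_{n+m})$ is free of rank one over $\Zb[q^{\pm 1}]$ on the class $[\widetilde{P}_{n+m}]$, so one can write $[\ind(\widetilde{P}_n\boxtimes \widetilde{P}_m)] = c(q)[\widetilde{P}_{n+m}]$ for a unique $c(q)\in \Zb[q^{\pm 1}]$, and Krull--Schmidt then upgrades this Grothendieck-group identity to an actual isomorphism of graded projectives. Thus the task is to show $c(q) = \tbinomb{n+m}{n}_q$. The plan is to extract $c(q)$ via the non-degenerate bialgebra pairing, applied to the unique simple $L_{n+m}$: by \eqref{dual1} and the $\bbN$-grading on $\cG^f(\As)$, only the $(n,m)$-summand of $\res L_{n+m}$ pairs nontrivially with $\widetilde{P}_n\boxtimes\widetilde{P}_m$, giving
\[
c(q)\,\abrac{\widetilde{P}_{n+m}, L_{n+m}}
\;=\;\abrac{\widetilde{P}_n\boxtimes \widetilde{P}_m,\ \res^{\nh_{n+m}}_{\nh_n\otimes\nh_m} L_{n+m}}.
\]

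Unwinding the definition \eqref{def:<>psi} of $\abrac{-,-}_\psi$, both sides reduce to graded dimensions of explicit subspaces of coinvariant rings. A direct inspection using the formula $e_k = x_1^{k-1}\cdots x_{k-1}\partial_{w_0}$ shows that $e_k L_k$ is one-dimensional, spanned by the top Schubert monomial $x_1^{k-1}\cdots x_{k-1}$ in degree $k(k-1)$, yielding $\abrac{\widetilde{P}_k, L_k} = q^{k(k-1)/2}$ for every $k$. For the right-hand pairing, the key input from classical invariant theory is the Chevalley--Steinberg identification: $\kb[x_1,\ldots,x_{n+m}]$ is free over $\sym_n\otimes\sym_m$ with graded rank equal to the Poincar\'e polynomial of the Grassmannian $\mathrm{Gr}(n, n+m)$, so the parabolic coinvariant ring $H_{n,m} := (\sym_n\otimes\sym_m)/(\sym_{n+m}^+)$ has graded dimension $q^{nm}\tbinomb{n+m}{n}_q$ (after converting to the symmetric $q$-number convention of the paper). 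The operator $\partial_{w_0^{(n,m)}} := \partial_{w_0^{(n)}}\partial_{w_0^{(m)}}$ surjects $L_{n+m}$ onto a degree-shifted copy of $H_{n,m}$, and subsequent multiplication by $x_1^{n-1}\cdots x_{n-1}\cdot x_{n+1}^{m-1}\cdots x_{n+m-1}$ is injective on this image (by Gorenstein duality for the coinvariants, or by matching top degrees), producing $\gdim_\kb (e_n\otimes e_m)L_{n+m} = q^{n(n-1)+m(m-1)+nm}\tbinomb{n+m}{n}_q$.

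Substituting back, the accumulated grading shift reads $-\tfrac{(n+m)(n+m-1)}{2} + n(n-1) + m(m-1) + nm - \tfrac{n(n-1)}{2} - \tfrac{m(m-1)}{2}$, which a short algebraic check reveals to equal zero, leaving $c(q) = \tbinomb{n+m}{n}_q$ as claimed. The main obstacle is the careful grading bookkeeping combined with verifying that $(e_n\otimes e_m)L_{n+m}$ is genuinely isomorphic to a shifted copy of $H_{n,m}$ rather than a proper subspace --- this relies on the freeness of $\kb[x_1,\ldots,x_{n+m}]$ over $\sym_n\otimes\sym_m$, which is ultimately a consequence of Steinberg's theorem on the cohomology of the flag variety.
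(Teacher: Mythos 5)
The paper gives no argument for \cref{KLIND}: it is quoted from \cite{KL09}, where the standard proof identifies $\ind(P_n\boxtimes P_m)$ with $\nh_{n+m}(e_n\otimes e_m)$ and decomposes it using the freeness of $\sym_n\otimes\sym_m$ over $\sym_{n+m}$ with graded rank $\tbinomb{n+m}{n}_q$ (up to the normalization shift). Your route is genuinely different and, in substance, correct: since $K_\oplus(\nh_{n+m})$ is free of rank one on $[P_{n+m}]$ and \cref{niceprop} makes $\nh_{n+m}$ graded semiperfect, every finitely generated graded projective is a direct sum of shifts of $P_{n+m}$ and is determined by its class, so the Krull--Schmidt upgrade is legitimate; the multiplicity $c(q)$ is then extracted by pairing with $L_{n+m}$ via \eqref{dual1}, which is exactly the machinery this paper sets up. Your graded-dimension bookkeeping also checks out: $e_kL_k$ is one-dimensional in degree $k(k-1)$ (with $|x_i|=2$), $\gdim (e_n\otimes e_m)L_{n+m}=q^{n(n-1)+m(m-1)+nm}\tbinomb{n+m}{n}_q$, and the exponents telescope to zero. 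What each approach buys: \cite{KL09} gets the explicit direct-sum decomposition by pure module theory over the centers, while your argument stays inside the Hopf-pairing framework of this paper at the cost of a coinvariant-algebra computation.

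Two steps should be tightened. First, the injectivity of multiplication by the double staircase $x^\delta:=x_1^{n-1}\cdots x_{n-1}\,x_{n+1}^{m-1}\cdots x_{n+m-1}$ on the image of $\partial:=\partial_{w_0^{(n)}}\partial_{w_0^{(m)}}$ in $L_{n+m}$ is asserted via ``Gorenstein duality or matching top degrees,'' neither of which, as stated, is a proof. A one-line fix: since $(e_n\otimes e_m)^2=e_n\otimes e_m$ and left multiplication by $x^\delta$ is injective on $\nh_n\otimes\nh_m$ (a free left module over the polynomial subalgebra), one has the identity $\partial\, x^\delta\, \partial=\partial$; hence if $u=\partial v$ and $x^\delta u=0$ in $L_{n+m}$, then $u=\partial x^\delta \partial v=\partial(x^\delta u)=0$. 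Alternatively, you can bypass the coinvariant geometry entirely: every graded composition factor of $\Res^{\nh_{n+m}}_{\nh_n\otimes\nh_m}L_{n+m}$ is a shift of $L_n\boxtimes L_m$, and $(e_n\otimes e_m)(-)$ is exact with $(e_n\otimes e_m)(L_n\boxtimes L_m)$ one-dimensional in degree $n(n-1)+m(m-1)$, so $\gdim (e_n\otimes e_m)L_{n+m}=q^{n(n-1)+m(m-1)}\,\gdim L_{n+m}/(\gdim L_n\,\gdim L_m)$, which gives the same polynomial. Second, a misattribution that does not affect the conclusion: the graded rank of $\kb[x_1,\ldots,x_{n+m}]$ over $\sym_n\otimes\sym_m$ is the Poincar\'e polynomial of a product of full flag varieties, not of the Grassmannian; what your computation of $\gdim H_{n,m}$ actually needs is freeness of $\sym_n\otimes\sym_m$ over $\sym_{n+m}$ with graded rank $\tbinom{n+m}{n}_{q^2}$, together with freeness of $\kb[x_1,\ldots,x_{n+m}]$ over $\sym_n\otimes\sym_m$ to see that the image of $\sym_n\otimes\sym_m$ in $L_{n+m}$ is exactly $H_{n,m}$ and not a proper quotient.
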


\begin{prop}
\begin{enua}
\item For all $n,m$, there is an isomorphism $\ind(P_n\boxtimes P_m)=\tbinom{n+m}{n}_{q^{-2}} P_{n+m}$.
Consequently, $\sbrac{P_n}\mapsto t^{(n)}:=\frac{t^n}{(n)_{q^{-2}}!}$ defines an isomorphism 
\[
\cK(\As)\cong \Zb[q, q^{-1}][t^{(n)}| \, n\ge 1].
\]
\item For all $n,m$, there is an isomorphism $\ind(L_n\boxtimes L_m)
\cong L_{n+m}$.
Consequently, $[L_n]\mapsto t^n$ defines an isomorphism    
\[
\cG^f(\As)\cong \Zb[q, q^{-1}][t].
\]
\end{enua}
\end{prop}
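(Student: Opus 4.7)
For part (a), my plan is to unwind \cref{KLIND}. Substituting $\widetilde P_k = q^{-k(k-1)/2}P_k$ into the isomorphism $\ind(\widetilde P_n\boxtimes \widetilde P_m)\cong \tbinomb{n+m}{n}_q\widetilde P_{n+m}$ and collecting the grading shifts gives $\ind(P_n\boxtimes P_m)\cong q^{-nm}\tbinomb{n+m}{n}_q P_{n+m}$. I would then use the elementary identity $(k)_{q^{-2}}=q^{-(k-1)}[k]_q$, which implies $(k)_{q^{-2}}!=q^{-k(k-1)/2}[k]_q!$ and hence $\tbinom{n+m}{n}_{q^{-2}}=q^{-nm}\tbinomb{n+m}{n}_q$; this produces the stated formula $\ind(P_n\boxtimes P_m)\cong \tbinom{n+m}{n}_{q^{-2}}P_{n+m}$ and therefore $[P_n][P_m]=\tbinom{n+m}{n}_{q^{-2}}[P_{n+m}]$ in $\cK(\As)$. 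The latter matches the divided-power identity $t^{(n)}t^{(m)}=\tbinom{n+m}{n}_{q^{-2}}t^{(n+m)}$ (a direct consequence of $t^{(n)}=t^n/(n)_{q^{-2}}!$), so $[P_n]\mapsto t^{(n)}$ extends to a $\Zb[q^{\pm 1}]$-algebra homomorphism, and it is an isomorphism since it sends one free $\Zb[q^{\pm 1}]$-basis to another.

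For part (b), the main task is the module isomorphism $\ind(L_n\boxtimes L_m)\cong L_{n+m}$; once this is established, the algebra isomorphism $\cG^f(\As)\cong\Zb[q^{\pm 1}][t]$ follows immediately because $[L_n][L_m]=[L_{n+m}]$ matches $t^n\cdot t^m = t^{n+m}$. Since $L_{n+m}$ is the unique graded simple of $\nh_{n+m}$ up to shift, every class in $G_0^f(\nh_{n+m})=\Zb[q^{\pm 1}][L_{n+m}]$ has the form $c[L_{n+m}]$ for a unique $c\in\Zb[q^{\pm 1}]$; if $c=1$, the corresponding module must be isomorphic to $L_{n+m}$ itself, since a module whose Grothendieck class is $[L_{n+m}]$ has $L_{n+m}$ as its only composition factor with multiplicity one.

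To pin down $c$, I plan to use the non-degeneracy of the bialgebra pairing (\cref{perfectlem}) together with \eqref{dual2}:
\[
c=\abrac{[P_{n+m}],c\cdot[L_{n+m}]}=\abrac{[P_{n+m}],[L_n][L_m]}=\abrac{[\res][P_{n+m}],[L_n]\boxtimes[L_m]}.
\]
This reduces the problem to extracting the coefficient of $[P_n]\boxtimes[P_m]$ in $[\res][P_{n+m}]=\sum_{k+\ell=n+m}[\Res^{\nh_{n+m}}_{\nh_k\otimes\nh_\ell}P_{n+m}]$. I would establish $\Res^{\nh_{n+m}}_{\nh_n\otimes\nh_m}P_{n+m}\cong P_n\boxtimes P_m$ via the Morita equivalences $\nh_k\simeq \sym_k$: under these, each $P_k$ corresponds to $\sym_k$ as the rank-one free module, and the embedding $\nh_n\otimes\nh_m\hookrightarrow\nh_{n+m}$ translates into the classical inclusion $\sym_{n+m}\hookrightarrow \sym_n\otimes\sym_m$. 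The rank-one free $\sym_{n+m}$-module restricts along this inclusion to the rank-one free $\sym_n\otimes\sym_m$-module, which pulls back under Morita to $P_n\boxtimes P_m$; hence $c=1$.

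The principal technical difficulty will be the Morita bookkeeping: one must carefully track the grading shifts introduced by the Morita bimodules $\nh_ke_k\cong \kb[x_1,\ldots,x_k]$ and verify they cancel, so that $\Res^{\nh_{n+m}}_{\nh_n\otimes\nh_m}P_{n+m}\cong P_n\boxtimes P_m$ holds in the \emph{graded} Grothendieck group without any additional grading shifts. An alternative route, avoiding Morita, would be to exploit the PBW basis $\{\partial_w\mid w\in W^{\min}\}$ of $\nh_{n+m}$ over $\nh_n\otimes\nh_m$ together with the explicit form $P_n=\nh_n\psi(e_n)$ to compute the restricted projective directly; this is more hands-on but the bookkeeping becomes intricate.
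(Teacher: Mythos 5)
Part (a) of your proposal is correct and is essentially the paper's own argument: it combines \cref{KLIND} with the identity $\tbinomb{m+n}{n}_q=q^{mn}\tbinom{m+n}{n}_{q^{-2}}$ (your derivation via $(k)_{q^{-2}}=q^{-(k-1)}[k]_q$ is exactly this identity), and the divided-power relation then gives the algebra isomorphism.

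Part (b) takes a genuinely different route, and as written it has a gap precisely where the content of the statement lies. Your determination of $c$ starts from $c=\abrac{[P_{n+m}],c\,[L_{n+m}]}$, which presumes $\abrac{[P_{n+m}],[L_{n+m}]}=1$. With the graded pairing $\abrac{[P],[M]}_\psi=\gdim_\kb P^\psi\otimes_{A_n}M$ and the representatives fixed in the proposition ($P_k=\nh_k\psi(e_k)$ and $L_k=\kb[x_1,\dots,x_k]/(\sym_k^+)$ graded from degree $0$), one gets $\abrac{[P_k],[L_k]}=\gdim_\kb\, e_kL_k$, and $e_kL_k$ is spanned by the top staircase class $x_1^{k-1}x_2^{k-2}\cdots x_{k-1}$, so this pairing is a nontrivial power of $q$, not $1$; \cref{niceprop} is an ungraded statement, so the dual bases of \cref{perfectlem} are dual only up to grading shifts. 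Your equation therefore needs a correcting power of $q$, which must then be shown to cancel against the (also unproven) shift relating $\Res^{\nh_{n+m}}_{\nh_n\otimes\nh_m}P_{n+m}$ to $P_n\boxtimes P_m$. Since the whole point of (b) is that $c$ equals $1$ on the nose (no $q$-shift in $\ind(L_n\boxtimes L_m)\cong L_{n+m}$), this bookkeeping is the proof rather than a deferrable technicality; you flag it as the ``principal difficulty'' but do not resolve it. Moreover, the Morita sketch has the variance backwards: $\Res^{\nh_{n+m}}_{\nh_n\otimes\nh_m}$ corresponds to base change $(\sym_n\otimes\sym_m)\otimes_{\sym_{n+m}}(-)$, not to restricting the free module along $\sym_{n+m}\hookrightarrow\sym_n\otimes\sym_m$, and the graded Morita bimodules $\nh_ke_k$ carry exactly the shifts in question. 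The paper's proof of (b) avoids all of this: the multiplication map $\nh_{n+m}\otimes_{\nh_n\boxtimes\nh_m}(L_n\boxtimes L_m)\to L_{n+m}$, $T\otimes(a\boxtimes b)\mapsto T(ab)$, is well defined and surjective, and the count $\dim_\kb \ind(L_n\boxtimes L_m)=\binom{n+m}{n}\,n!\,m!=(n+m)!=\dim_\kb L_{n+m}$ forces it to be an isomorphism. To salvage your approach you would need to compute $\abrac{[P_k],[L_k]}$ and the shift in $\Res P_{n+m}$ explicitly and verify the cancellation, or simply adopt the dimension-count argument.
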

\begin{proof}
    Part (a) follows from combining \cref{KLIND} and the equality $\tbinomb{m+n}{n}_q= q^{mn}\tbinom{m+n}{n}_{q^{-2}}$.

    For (b), one checks that the map 
    \begin{align*}
    \nh_{n+m} \otimes_{\nh_n\boxtimes \nh_m} \scalemath{0.9}{\frac{\kb[x_1, \ldots, x_n]}{(\sym_n^+)}\boxtimes \frac{\kb[x_1, \ldots, x_m]}{(\sym_m^+)}}&\longrightarrow \scalemath{0.9}{\frac{\kb[x_1, \ldots, x_{n+m}]}{(\sym_{n+m}^+)}}  \\
    T\otimes (a\boxtimes b) &\longrightarrow T(ab)
    \end{align*} 
    is well-defined, surjective, and so by dimension reasons must be an isomorphism. 
\end{proof}

\subsection{Commutation Relations}
Let $P=P_1:=\kb[x_1]$,
and let $M=L_1:=\kb[x_1]/(x_1)=\kb$, the unique graded simple of $ \nh_1=\kb[x_1]$. Define functors
\[
\paren{\I_\kb}_n:=\I_\kb|_{\nh_n\-\fmod},
\quad\textup{and}\quad
\paren{\TR_{\kb[x_1]}}_n:=\TR_{\kb[x_1]}|_{\nh_n\-\fmod}.
\]

By definition, $\paren{\I_\kb}_n=\Ind_{ \nh_n \otimes \kb[x_{n+1}] }^{\nh_{n+1}}(-\boxtimes \kb)$ and $\paren{\TR_{\kb[x_1]}}_n=\Res_{\nh_{n-1}\otimes \kb[x_n]}^{\nh_{n}}(-)$. Thus, we see that:
\begin{itemize}
\item
The functor $\paren{\I_\kb}_n$ is represented by the $(\nh_{n+1}, \nh_n)$-bimodule $\nh_{n+1}\mathop\otimes\limits_{\nh_n  \otimes \kb[x_{n+1}] } (\nh_n\boxtimes \kb)$.
\item The functor $\paren{\TR_{\kb[x_1]}}_n$ \vspace{0.5ex}
     is represented by the 
     $(\nh_{n-1}, \nh_n)$-bimodule 
     $({}_{1\to n-1} \nh_n)$, which is $\nh_n$ regarded as a left $\nh_{n-1}$-module acting on strands $1$ through $n-1$.
\end{itemize}

\begin{example}
For $n=1$, $\nh_1=\kb[x_1]$. As $(\kb[x_1], \kb[x_1])$-bimodules,
\begin{equation}\label{resindeq}
\paren{\TR_{\kb[x_1]}\circ \I_\kb}_{1}
={}_{1\to 1}\nh_2\otimes_{\nh_1\boxtimes \kb[x_2]} (\nh_1 \otimes \kb)
= {}_{1\to 1}\nh_2\otimes_{\kb[x_1]\otimes  \kb[x_2]}  (\kb[x_1]  \boxtimes \kb).   
\end{equation}
Similarily, as $(\kb[x_1], \kb[x_1])$-bimodules, 
\[
\paren{\I_\kb\circ \TR_{\kb[x_1]}}_{1}
=\nh_1\otimes_{  \nh_{0}\otimes \kb[x_1] } ({}_{1\to 0}\nh_1  \boxtimes \kb )
= \kb[x_1]\otimes_{ \kb \otimes   \kb[x_1]} ({}_{1\to 0}\kb[x_1] \boxtimes \kb)    
=  {}^0 \kb[x_1], 
\]
where ${}^0 \kb[x_1] $ is the $(\kb[x_1], \kb[x_1])$-bimodule where $x_1$ acts by 0 on the left. 
Since $\nh_2$ has a $\kb$-basis $\set{x_1^ax_2^b}_{a,b\ge 0}\sqcup \set{\partial_1 x_1^ax_2^b}_{a,b\ge 0} $, 
 the right-hand side of \eqref{resindeq} has a $\kb$-basis 
 $S \sqcup Q$ given by
\[
S:=\set{1\otimes_{\kb[x_1]\boxtimes \kb[x_2]} x_1^b}_{b\ge 0},
\qquad 
Q:=\set{\partial_1 \otimes_{\kb[x_1]\boxtimes \kb[x_2]} x_1^b}_{b\ge 0}.
\]
Note that $S$ is a sub $(\kb[x_1], \kb[x_1])$-module isomorphic to $\kb[x_1]$ and we thus have a short exact sequence of $(\kb[x_1], \kb[x_1])$-bimodules: 
\[ 
0\to  \kb[x_1] 
\to \TR_{\kb[x_1]}\circ \I_\kb|_{\nh_1\-\fmod} \to Q\to 0.
\]
Using the relation $x_1\partial_1=\partial_1 x_2+1$, we see that $Q$ is isomorphic to ${}^0 \kb[x_1] $, without grading considerations.
Moreover, after taking grading into account,  $Q \cong q^{-2}    \paren{\I_\kb\circ \TR_{\kb[x_1]}}_{1}$  as graded $(\kb[x_1], \kb[x_1])$-bimodules. 
In general, we have that:
\end{example}

\begin{lemma}
\label{basecase}
As $(\nh_n, \nh_{n})$-bimodules,
    \begin{align*}
        \paren{\TR_{\kb[x_1]}\circ \I_\kb}_{n}
        &=
        {}_{1\to n}\nh_{n+1}
        \mathop\otimes\limits_{\nh_n\boxtimes \kb[x_{n+1}]}
        (\nh_n \boxtimes \kb), 
        \\
        \paren{\I_\kb\circ \TR_{\kb[x_1]}}_{n}
        &= \nh_{n}
        \mathop\otimes\limits_{  \nh_{ n-1} \boxtimes \kb[x_n] } 
        ({}_{1\to n-1}\nh_{ n}\boxtimes \kb). 
    \end{align*} 
Moreover, there is a nonsplit short exact sequence of graded $(\nh_n, \nh_{n})$-bimodules  
\[   
0\to  \nh_n \to \paren{\TR_{\kb[x_1]}\circ \I_\kb}_{n} \to q^{-2}  \paren{\I_\kb\circ \TR_{\kb[x_1]}}_{n}\to 0. 
\]
\end{lemma}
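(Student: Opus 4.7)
For Part 1, the representing bimodules unwind directly: $\I_\kb|_{\nh_n\-\fmod}$ is represented by $\nh_{n+1}\otimes_{\nh_n\boxtimes\kb[x_{n+1}]}(\nh_n\boxtimes\kb)$, and $\TR_{\kb[x_1]}|_{\nh_m\-\fmod}$ by ${}_{1\to m-1}\nh_m$ as recorded just above the lemma; composing via tensor product over $\nh_{n+1}$ (respectively $\nh_{n-1}$) gives the two stated formulas.

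For the short exact sequence, I first identify $(\TR_{\kb[x_1]}\circ\I_\kb)_n$ with the bimodule $\nh_{n+1}/\nh_{n+1}x_{n+1}$, where both $\nh_n$-actions come from the inclusion on strands $1,\dots,n$. The map $\iota:\nh_n\hookrightarrow(\TR_{\kb[x_1]}\circ\I_\kb)_n$, $a\mapsto[a]$, is an injective bimodule morphism by PBW (the subspaces $\nh_n$ and $\nh_{n+1}x_{n+1}$ are supported on disjoint sets of basis elements $\partial_w x^\alpha$: the former forces $w\in S_n$ and $a_{n+1}=0$, the latter forces $a_{n+1}\geq 1$). For the quotient, I construct the inverse
\[
\overline{\psi}:q^{-2}(\I_\kb\circ\TR_{\kb[x_1]})_n\longrightarrow(\TR_{\kb[x_1]}\circ\I_\kb)_n/\iota(\nh_n),\qquad c\otimes(d\otimes 1)\longmapsto[c\,\partial_n\,d],
\]
the grading shift $q^{-2}$ accounting for $|\partial_n|=-2$. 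Well-definedness requires checking the balancing relations of $\nh_{n-1}\boxtimes\kb[x_n]$: the case $r\in\nh_{n-1}$ follows from $\partial_n$ commuting with strands $1,\dots,n-1$, while the nontrivial case $r=x_n$ uses the slide $x_n\partial_n=\partial_n x_{n+1}+1$ together with $d\in\nh_n$ commuting with $x_{n+1}$, giving $c\,x_n\,\partial_n\,d\equiv cd$ modulo $\nh_{n+1}x_{n+1}$, and $[cd]\in\iota(\nh_n)$ as required. Bimodule linearity is then immediate, surjectivity follows from the unique factorization $w=s\cdot s_i\cdots s_n$ for $w\in S_{n+1}\setminus S_n$ (so that $\partial_w=(\partial_s\partial_i\cdots\partial_{n-1})\partial_n$ lies visibly in the image), and injectivity is verified by a graded-dimension comparison using the free decomposition $\nh_n/\nh_n x_n\cong\bigoplus_{i=1}^n q^{-2(n-i)}\nh_{n-1}$ as right $\nh_{n-1}$-modules, which yields matching graded dimensions on both sides after the $q^{-2}$ shift.

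For nonsplitness, assume a splitting $(\TR_{\kb[x_1]}\circ\I_\kb)_n=\iota(\nh_n)\oplus Q$ as graded bimodules, and decompose $[\partial_n]=\iota(z_0)+y$ with $z_0\in\nh_n$ and $y\in Q$. Left-multiplying by $x_n$ and using $x_n\partial_n\equiv 1$ modulo $\nh_{n+1}x_{n+1}$ yields $\iota(1)=\iota(x_n z_0)+x_n y$; since $\iota(\nh_n)\cap Q=0$ one concludes $x_n y=0$ and $x_n z_0=1$ in $\nh_n$. But no such $z_0$ exists: on the faithful polynomial representation $\kb[x_1,\dots,x_n]$, multiplication by $x_n$ strictly raises the $x_n$-degree and cannot produce the constant $1$. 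The principal obstacle throughout is the well-definedness and bijectivity of $\overline{\psi}$, which requires careful coordination between the slide relation in $\nh_{n+1}$ and the balancing conditions over $\nh_{n-1}\boxtimes\kb[x_n]$; once this is in place, the nonsplit argument is immediate from the same slide relation.
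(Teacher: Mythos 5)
Your proof is correct and essentially the paper's: after identifying $\paren{\TR_{\kb[x_1]}\circ \I_\kb}_{n}$ with $\nh_{n+1}/\nh_{n+1}x_{n+1}$, your map $c\otimes(d\boxtimes 1)\mapsto [c\,\partial_n\, d]$ is exactly the paper's $\Phi: v\otimes w\mapsto v\partial_n\otimes w$, with the same slide-relation check for well-definedness, and your graded-dimension count via freeness of $\nh_{m}$ over $\nh_{m-1}\otimes\kb[x_m]$ is the same coset decomposition the paper uses to conclude bijectivity. The one genuine addition is that you actually verify nonsplitness (via $x_n\cdot[\partial_n]=[1]$ and the fact that $x_n z_0=1$ is impossible in $\nh_n$), a claim the lemma makes but the paper's proof never addresses.
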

\begin{proof}
Left coset representatives of $S_n$ in $S_{n+1}$ are of the form $\set{e, s_n, s_{n-1}s_n, \ldots, s_1\ldots s_n}$. Thus, we obtain the following vector space decompositions:
\[ 
\paren{\TR_{\kb[x_1]}\circ \I_\kb}_{n}
=
\set{1\otimes_{\nh_n \boxtimes \kb[x_{n+1}]} \nh_n} 
\sqcup \ldots \sqcup  
\set{ \partial_1\ldots \partial_{n-1} \partial_n\otimes_{\nh_n \boxtimes \kb[x_{n+1}]} \nh_n },  \]
\[ 
\paren{\I_\kb\circ \TR_{\kb[x_1]}}_{n}
=\set{1\otimes_{\nh_{n-1} 
\boxtimes \kb[x_{n}]} \nh_n} 
\sqcup \ldots \sqcup  
\set{ \partial_1\ldots  \partial_{n-1}\otimes_{\nh_{n-1} \boxtimes \kb[x_{n}]} \nh_n }.  
\]
Note that $S:=\set{1\otimes_{\nh_n\boxtimes \kb[x_{n+1}]} \nh_n \boxtimes \kb}$ is a $(\nh_n, \nh_{n})$-submodule of $\paren{\TR_{\kb[x_1]}\circ \I_\kb}_{n}$ that is isomorphic to $\nh_n$. 
Therefore, there is an exact sequence:
\[ 
0\to \nh_n\to \paren{\TR_{\kb[x_1]}\circ \I_\kb}_{n} \to Q\to 0. 
\]
Define a map $ \Phi: \paren{\I_\kb\circ \TR_{\kb[x_1]}}_{n}\to Q$ by $v\otimes w\mapsto v\partial_n \otimes w$. 
We first check that this map is well defined. 
Let $v,w\in \nh_n$. 
Then, $vx_n\otimes  w=v\otimes x_n w=0$ in $\paren{\I_\kb\circ \TR_{\kb[x_1]}}_{n}$. 
Next, we compute that
\[
\Phi(vx_n\otimes w) 
= vx_n\partial_n \otimes w
= v \partial_n x_{n+1}\otimes w+v\otimes w
=v\partial_n \otimes x_{n+1}w+1\otimes vw
\sim 0  \ \textup{in} \ Q.   
\]
Similarly, for $n\in \nh_{n-1}$, $vn\otimes w=v\otimes n w $. 
We further check that
\[ 
\Phi(vn\otimes w)
=vn\partial_n \otimes w
=v\partial_n \otimes nw
=\Phi(v\otimes nw). 
\]
By construction, $\Phi$ is a morphism of $(\nh_n, \nh_{n})$-bimodules.
By the vector space decomposition above, $\Phi$ is bijective, and thus we have an isomorphism $q^{-2}\paren{\I_\kb\circ \TR_{\kb[x_1]}}_{n}\cong Q$. 
\end{proof}

\subsection{Higher Commutation Relations}

We will make use of the diagrammatics for $\nh_n$ developed in \cite{KLMS12}. Elements of the form $xy \in \nh_n$ are read from right to left, and drawn from bottom to top. For example, the element $x_1\partial_1$ is drawn as 
\[\begin{tikzpicture}[scale=0.5]
\draw (0,0) -- (2,2);
\draw (0,2) -- (2,0);
\filldraw (0.5,1.5) circle (3pt);
\end{tikzpicture}
 \]

\begin{lemma}
\label{idemlem}
For any $k\in \mathbb{Z}^+$,
    $e_k=x_1\ldots x_{k-1}\partial_{k-1}\ldots \partial_1 e_{k-1}$.
\end{lemma}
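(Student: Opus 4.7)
The plan is to prove this identity by combining a suitable reduced factorization of the longest element $w_0^{(k)}\in S_k$ with a factorization of the staircase monomial $\mu_k := x_1^{k-1} x_2^{k-2}\cdots x_{k-1}$, followed by a dot-slide / diagrammatic isotopy argument.

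First I would use the reduced factorization $w_0^{(k)} = w_0^{(k-1)}\cdot(s_{k-1}s_{k-2}\cdots s_1)$, where $w_0^{(k-1)}$ lies in $S_{\{1,\ldots,k-1\}}$. This is reduced because the lengths add: $\binom{k-1}{2}+(k-1)=\binom{k}{2}$. Hence
\[
\partial_{w_0^{(k)}} \;=\; \partial_{w_0^{(k-1)}}\cdot\partial_{k-1}\partial_{k-2}\cdots\partial_1.
\]
Combined with $\mu_k = (x_1 x_2\cdots x_{k-1})\cdot \mu_{k-1}$ (where $\mu_{k-1}=x_1^{k-2}\cdots x_{k-2}$ is the staircase for $\nh_{k-1}$ on strands $1,\ldots,k-1$, and all the $x_i$'s commute), this yields
\[
e_k \;=\; \mu_k\,\partial_{w_0^{(k)}} \;=\; (x_1\cdots x_{k-1})\cdot e_{k-1}\cdot \partial_{k-1}\partial_{k-2}\cdots\partial_1,
\]
with $e_{k-1}$ embedded via strands $1,\ldots,k-1$.

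The remaining step is to exchange $e_{k-1}$ with the cascade, establishing
\[
e_{k-1}\cdot\partial_{k-1}\cdots\partial_1 \;=\; \partial_{k-1}\cdots\partial_1\cdot e_{k-1},
\]
where $e_{k-1}$ on the right is understood via the embedding of $\nh_{k-1}$ onto strands $2,\ldots,k$ (the cascade $\partial_{k-1}\cdots\partial_1$ implements the rotation of the first strand to the rightmost position, which at the level of $S_k$ conjugates $s_i\mapsto s_{i+1}$ for $1\le i\le k-2$). Diagrammatically, this is a planar isotopy sliding the idempotent box past the cascade of crossings. Algebraically, one pushes each generator $x_i$ or $\partial_i$ of $e_{k-1}$ past the cascade using the slide relations \eqref{slideq}; every error term picks up an extra factor of $\partial_j$ for some $1\le j\le k-2$, which is killed upon the implicit right-multiplication by $\partial_{w_0^{(k-1)}}$ inside $e_{k-1}$, since $\partial_j\partial_{w_0^{(k-1)}}=0$ by the longest-element property.

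The main obstacle is the systematic bookkeeping required for the algebraic verification of this interchange. The cleanest route is via the diagrammatic calculus of \cite{KLMS12}, where the identity reduces to a straightforward isotopy of strings with dots; a purely algebraic approach is possible but requires careful tracking of the dot-slide error terms and their annihilation against the longest-element divided difference.
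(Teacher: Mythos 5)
Your opening reduction is correct and is a sensible way to start: since $w_0^{(k)}=w_0^{(k-1)}(s_{k-1}\cdots s_1)$ is reduced, $\partial_{w_0^{(k)}}=\partial_{w_0^{(k-1)}}\partial_{k-1}\cdots\partial_1$, hence $e_k=(x_1\cdots x_{k-1})\,e_{k-1}\,\partial_{k-1}\cdots\partial_1$ with $e_{k-1}$ on strands $1,\dots,k-1$. Your reading of the target, with the $e_{k-1}$ on the right placed on strands $2,\dots,k$, is also the right one: with the usual first-strands embedding the printed identity already fails at $k=3$ (there $x_1x_2\,\partial_2\partial_1\,x_1\partial_1=x_1x_2\partial_2\partial_1\neq e_3$, while $x_1x_2\,\partial_2\partial_1\,x_2\partial_2=e_3$), and the shifted version is what the paper's diagrams actually use; the paper states the lemma without proof, so there is nothing to compare against there. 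The genuine gap is that the remaining interchange $e_{k-1}\,\partial_{k-1}\cdots\partial_1=\partial_{k-1}\cdots\partial_1\cdot(\text{shifted }e_{k-1})$ is the entire content of the lemma, and it is \emph{not} a planar isotopy: in $\nh_k$ dots do not slide through crossings freely, so ``sliding the box past the cascade'' is precisely the relation to be proved, not a diagrammatic move one may invoke.

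Moreover, the error-killing mechanism you cite is not the one that operates. Writing $e_{k-1}=\mu_{k-1}\partial_{w_0^{(k-1)}}$ and pushing from the right, the crossings pass through exactly: $\partial_i\,\partial_{k-1}\cdots\partial_1=\partial_{k-1}\cdots\partial_1\,\partial_{i+1}$ for $1\le i\le k-2$ (both words reduced), so $\partial_{w_0^{(k-1)}}\,\partial_{k-1}\cdots\partial_1=\partial_{k-1}\cdots\partial_1\,\partial_{w_0'}$, where $w_0'$ is the longest element of $S_{\{2,\dots,k\}}$. The dot-slide errors then come from $x_i\,\partial_{k-1}\cdots\partial_1=\partial_{k-1}\cdots\partial_1\,x_{i+1}+\partial_{k-1}\cdots\widehat{\partial_i}\cdots\partial_1$, and what annihilates the broken cascade is right multiplication by the \emph{shifted} longest element $\partial_{w_0'}$: commute $\partial_{i+1}$ across $\partial_{i-1}\cdots\partial_1$ and use $\partial_{i+1}\partial_{w_0'}=0$ (valid because $2\le i+1\le k-1$). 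This is not the statement ``$\partial_j\partial_{w_0^{(k-1)}}=0$ for $1\le j\le k-2$'' in your sketch. Even with this corrected, pushing the several dots of $\mu_{k-1}$ through one at a time creates second-order errors in which monomials in $x_2,\dots,x_{k-1}$ sit wedged between a broken cascade and $\partial_{w_0'}$; commuting them back produces doubly-broken cascades whose vanishing needs its own argument, and your proposal does not address this bookkeeping. A clean completion: since $\nh_k$ is free over its polynomial subalgebra, left multiplication by $x_1\cdots x_{k-1}$ is injective, so the lemma is equivalent to $\mu_{k-1}\,\partial_{w_0^{(k)}}=\partial_{k-1}\cdots\partial_1\,(x_2^{k-2}x_3^{k-3}\cdots x_{k-1})\,\partial_{w_0'}$, which can be proved by induction on $k$ (or by isolating the vanishing statement above as a separate lemma), or by citing the established box-absorption and dot-slide identities for $e_k$ in \cite{KLMS12} rather than an appeal to isotopy.
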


\begin{theorem}
    Let $\ell\set{e_k}$ be the idempotent $e_k$ placed on the last $k$ strands. As $(\nh_{n-k+1}, \nh_{n})$-bimodules 
    \begin{align*}
        \paren{\TR_{P_k}\circ \I_\kb}_{n}
        &={}_{1\to n+1-k} \, \ell\set{e_k}\nh_{n+1}
        \mathop\otimes\limits_{\nh_n\boxtimes \kb[x_{n+1}]} 
        (\nh_n \boxtimes \kb), 
        \\
         \paren{\I_\kb\circ \TR_{P_k}}_{n}
         &= \nh_{n+1-k}
         \mathop\otimes\limits_{  \nh_{ n-k} \boxtimes \kb[x_{n-k+1}] } 
         ({}_{1\to n-k} \, \ell\set{e_k}\nh_{ n}\boxtimes \kb). 
    \end{align*} 
Moreover, there is a nonsplit short exact sequence of graded $(\nh_{n-k+1}, \nh_{n})$-bimodules 
\[   
0\to \paren{\TR_{P_{k-1}}}_{n}  
\to \paren{\TR_{P_k}\circ \I_\kb}_{n} 
\to q^{-2k}  \paren{\I_\kb\circ \TR_{P_k}}_{n}
\to 0. 
\]
\end{theorem}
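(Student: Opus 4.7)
The plan is to generalize \cref{basecase} (which handled $k=1$) by treating $e_k$ as a \emph{thickened identity} and carefully tracking the extra nilHecke generators that appear. The two bimodule presentations follow by unpacking the definitions, using $P_k^\psi = e_k\nh_k$ together with (TA3) applied to the embeddings $\nh_n\boxtimes \kb[x_{n+1}]\hookrightarrow \nh_{n+1}$ and $\nh_{n-k}\boxtimes \nh_k\hookrightarrow \nh_n$; these computations are entirely formal.

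For the short exact sequence, the crucial input is the identity
\[
\ell\{e_k\}\;=\;x_{n-k+2}\cdots x_n\cdot \partial_n\partial_{n-1}\cdots\partial_{n-k+2}\cdot \ell_{\nh_n}\{e_{k-1}\}\quad\text{in }\nh_{n+1},
\]
obtained by applying \cref{idemlem} to the last $k$ strands; here $\ell_{\nh_n}\{e_{k-1}\}$ denotes $e_{k-1}$ placed on strands $n-k+2,\ldots,n$ of $\nh_n$. I use this to define the inclusion $\iota:\paren{\TR_{P_{k-1}}}_n\to \paren{\TR_{P_k}\circ\I_\kb}_n$ by $\ell_{\nh_n}\{e_{k-1}\}\,w\mapsto\ell\{e_k\}\otimes (w\boxtimes 1)$, which is well-defined because the factor $\ell_{\nh_n}\{e_{k-1}\}$ of $\ell\{e_k\}$ slides across the tensor into the $\nh_n$-slot; the image corresponds to the identity coset representative of $S_n$ in $S_{n+1}$. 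On the quotient side, I define $\Phi:\paren{\I_\kb\circ\TR_{P_k}}_n\to \paren{\TR_{P_k}\circ\I_\kb}_n/\mathrm{Im}(\iota)$ by a formula of the shape $v\otimes(u\boxtimes 1)\mapsto v\cdot \partial_{n-k+1}\partial_{n-k+2}\cdots\partial_n\otimes(u\boxtimes 1)$, so that $\Phi$ picks up a product of $k$ elements of degree $-2$, which accounts for the $q^{-2k}$ shift.

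Bijectivity of $\Phi$ rests on two parallel decompositions: on the target side, the minimal coset representatives $\{1,s_n,s_{n-1}s_n,\ldots,s_1\cdots s_n\}$ for $S_n$ in $S_{n+1}$ combined with the nilCoxeter identity $\partial_{w_0}\partial_i=0$ (for $i<k$) imply that $\ell\{e_k\}\partial_{i,n}=0$ whenever $i\in\{n-k+2,\ldots,n\}$, so the surviving summands of the quotient are indexed by $i\in\{1,\ldots,n-k+1\}$; on the source side, the standard coset decomposition of $\nh_{n-k+1}$ over $\nh_{n-k}\boxtimes \nh_1$ is indexed by the same set, and $\Phi$ matches them basis-wise. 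The main obstacle will be verifying well-definedness of $\Phi$ across the tensor relation over $\nh_{n-k}\boxtimes \nh_1$: one must show that the image of $v\cdot x_{n-k+1}\otimes(u\boxtimes 1)$ vanishes in the quotient. Iterated application of the slide relation $x_i\partial_i=\partial_i x_{i+1}+1$ reduces this to a sum of terms of the form $(\cdots)\cdot \partial_j\partial_{j+1}\cdots\partial_n\otimes(\cdots)$ with $j\ge n-k+2$, each of which vanishes after left multiplication by $\ell\{e_k\}$. Nonsplitness then follows by the same degree obstruction as in \cref{basecase}.
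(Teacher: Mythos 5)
Your overall route is the paper's: the same coset decompositions of the two bimodules, the same comparison map (right multiplication by the chain $\partial_{n+1-k}\cdots\partial_n$, which is what produces the shift $q^{-2k}$), and the same key input \cref{idemlem}. The gaps are in the two places where the real work happens. First, exactness on the left is not established: your $\iota$ is well defined and surjects onto $S:=\ell\set{e_k}\otimes(\nh_n\boxtimes\kb)$, but you never show it is injective, i.e.\ that $S$ is isomorphic to $\paren{\TR_{P_{k-1}}}_{n}$ rather than a proper quotient of it. The justification ``the image corresponds to the identity coset representative'' does not work as stated: $\ell\set{e_k}$ contains $\partial_n$, so it is not supported on the identity coset of $S_n$ in $S_{n+1}$, and computing its identity-coset component inside the tensor product (where $x_{n+1}$ acts by zero on $\kb$) is exactly the content of the paper's diagrammatic induction on $k$, which applies \cref{idemlem}, then the inductive hypothesis, then \eqref{slideq}. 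You need that induction, or a substitute such as a graded-dimension comparison against the two decompositions, before you may call the kernel of the quotient map $\paren{\TR_{P_{k-1}}}_{n}$.

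Second, your well-definedness mechanism for $\Phi$ is false for precisely the two terms that matter. Sliding $x_{n-k+1}$ through $\partial_{n+1-k}\cdots\partial_n$ via \eqref{slideq} produces (i) intermediate correction terms with a trailing chain $\partial_j\cdots\partial_n$, $j\ge n+2-k$, and these do die against $\ell\set{e_k}$ by \eqref{lzeroeq}; but also (ii) the leading term $\ell\set{e_k}v\,\partial_{n+1-k}\cdots\partial_n x_{n+1}\otimes(u\boxtimes 1)$, which is not annihilated by $\ell\set{e_k}$ and vanishes only because $x_{n+1}$ crosses the tensor and acts by zero on $\kb$; and (iii) the final correction term $\ell\set{e_k}v\,\partial_{n+1-k}\cdots\partial_{n-1}\otimes(u\boxtimes 1)$, which is a nonzero element of $\paren{\TR_{P_k}\circ\I_\kb}_{n}$ (already at $k=1$ it is the term $1\otimes vw$ in \cref{basecase}) and is zero only in the quotient, because $v\partial_{n+1-k}\cdots\partial_{n-1}\in\nh_n$ slides across the tensor and lands the term in $\mathrm{Im}(\iota)$. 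So ``each of which vanishes after left multiplication by $\ell\set{e_k}$'' cannot be the argument; terms (ii) and (iii) are exactly why the sequence is exact and must be handled by these separate mechanisms. A smaller point: as written, the image of your $\Phi$ does not visibly lie in $\ell\set{e_k}\nh_{n+1}\otimes(\nh_n\boxtimes\kb)$; you need the prefix $\ell\set{e_k}$ in the formula, and to match the source and target summands ``basis-wise'' you also need the thick-calculus identity of \cite[Lemma 2.2]{KLMS12}, which the paper invokes for precisely this purpose.
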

\begin{proof}
    Left coset representatives of $S_n$ in $S_{n+1}$ are of the form $\set{e, s_n, s_{n-1}s_n, \ldots, s_1\ldots s_n}$. However, note that 
    \begin{equation}\label{lzeroeq}
        \ell\set{\partial_{w_0}(k)} \partial_i=0 
        \quad \textup{for all}\quad   n+2-k \le i\le n, 
    \end{equation}
    as the corresponding expression in $S_{n+1}$ is not reduced. It follows that
    \begin{equation}\label{eq:lek1neq}
        \ell\set{e_k} \partial_i \partial_{i+1}\ldots \partial_n=0 
    \quad \textup{for all}\quad   n+2-k \le i\le n. 
    \end{equation}  
    On the other hand, 
    $\ell\set{\partial_{w_0}(k)} \partial_r \partial_{r+1}\ldots \partial_n\neq 0 $ if $ 1\le r\le n+1-k$, as the corresponding expression in $S_{n+1}$ is reduced. 
    Thus,
    \begin{equation}\label{eq:lekneq}
    \ell\set{e_k} \partial_r \partial_{r+1}\ldots \partial_n\neq 0
    \quad \textup{for all}\quad  1\le r\le n+1-k.    
    \end{equation}
    For brevity, we denote
    $\otimes^\prime :=\otimes_{\nh_n \boxtimes \kb[x_{n+1}]}$. 
    From \eqref{eq:lek1neq} and \eqref{eq:lekneq} we obtain the following vector space decompositions:
    \begin{multline*}       
    \paren{\TR_{P_k}\circ \I_\kb}_{n}
    =
    \set{\ell\set{e_k}\otimes^\prime \nh_n\boxtimes \kb } \sqcup  
    \\
    \set{ \ell\set{e_k}\partial_{n+1-k}\ldots \partial_n\otimes^\prime\nh_n\boxtimes \kb  }
    \sqcup  \ldots \sqcup  
    \set{ \ell\set{e_k}\partial_1\ldots \partial_n\otimes^\prime\nh_n \boxtimes \kb  }.
    \end{multline*}
    \begin{multline*}    
    \paren{\I_\kb\circ \TR_{P_k}}_{n} 
    = 
    \set{1\otimes_{  \nh_{ n-k} \boxtimes \kb[x_{n-k+1}] }\ell\set{e_k}\nh_n\boxtimes \kb } 
    \\
    \sqcup \ldots \sqcup  
    \set{ \partial_1\ldots  \partial_{n-k}\otimes_{  \nh_{ n-k} \boxtimes \kb[x_{n-k+1}] }\ell\set{e_k} \nh_n\boxtimes \kb  }.
    \end{multline*}
    Note that $S:=\set{ \ell\{e_{k}\}\otimes^\prime \nh_n\boxtimes \kb }$ is a sub $(\nh_{n-k+1}, \nh_{n})$-bimodule of $\paren{\TR_{P_k}\circ \I_\kb}_{n}$. \vspace{1ex} \\
    \textbf{Claim: } $S\cong \paren{\TR_{P_{k-1}}}_{n}= {}_{1\to n-(k-1)}\ell\set{e_{k-1}}\nh_n$. Diagrammatically, this is represented by
    \begin{equation}
    \label{ekeq}
\vcenter{\hbox{
        \begin{tikzpicture}[scale=0.35,line width=0.8pt,>=stealth]
  \draw (0,4) -- (0,2);
  \draw ( -3.5,4) -- ( -3.5,2);
  \draw (0,0.7) -- (0,-1);
  \draw ( -3.5,0.7) -- ( -3.5,-1);

  \node at (-2,-0.3) {$\cdots$};
  \node at (-2,3) {$\cdots$};
  \draw (-6,-3) rectangle (0.7,-1);
  \node at (-1.8,-2) {$\scalemath{1.1}{\nh_n}$};
  \draw (-2,-1) -- (-2,-1);

  \node at (2,-2.2) {$\scalemath{1.2}{\boxtimes \,  \kb}$};
  \draw (2.4,0.7) -- (2.4,-1.5);
  \draw (2.4,4) -- (2.4,2);

  \draw (-4,2) rectangle (2.8,0.7);
  \node at (-0.5,1.3) {$e_{k}$};
\end{tikzpicture} \raisebox{6ex}{ $\ \cong $}\begin{tikzpicture}[scale=0.35,line width=0.8pt,>=stealth]

  \draw (0,4) -- (0,2);
  \draw ( -3.5,4) -- ( -3.5,2);
  \draw (0,0.7) -- (0,-1);
  \draw ( -3.5,0.7) -- ( -3.5,-1);

  \node at (-2,-0.3) {$\cdots$};
  \node at (-2,3) {$\cdots$};
  \draw (-6,-3) rectangle (0.7,-1);
  \node at (-1.8,-2) {$\scalemath{1.1}{\nh_n}$};
  \draw (-2,-1) -- (-2,-1);

  \node at (2,-2.2) {$\scalemath{1.2}{\boxtimes \,  \kb}$};
  \draw (2.4,4) -- (2.4,-1.5);

  \draw (-4,2) rectangle (1.5,0.7);
  \node at (-0.5,1.3) {$e_{k-1}$};
\end{tikzpicture}}}
    \end{equation}
We proceed by induction on $k$, where the base case is handled by \cref{basecase}. 
By \cref{idemlem}, the left-hand side of \eqref{ekeq} is the leftmost diagram below:  
\[
\begin{tikzpicture}[scale=0.35,line width=0.8pt,>=stealth]
  \draw (-2,4) -- (-2,2);
  \draw ( 0,4) -- ( 0,2);
  \draw ( 2.4,4) -- ( 2.4,2);
  \draw (-2,0.7) -- (-2,-1);
  \draw ( 0,0.7) -- ( 0,-1);
  \draw ( 2.4,0.7) -- ( 2.4,-1);

  \fill (-2,3.5) circle (5pt);
  \fill ( 0,3.5) circle (5pt);
  \fill ( 2.4,3.5) circle (5pt);

  \node at (-1,0) {$\cdots$};
  \node at (-1,2.4) {$\cdots$};
  \draw (-6,-3) rectangle (0.7,-1);
  \node at (-1.8,-2) {$\scalemath{1.1}{\nh_n}$};
  \draw (-2,-1) -- (-2,-1);

  \node at (2,-2.2) {$\scalemath{1.2}{\boxtimes \,  \kb}$};
  \draw (2.4,-1) -- (2.4,-1.5);

  \draw (-4,-1)
        .. controls (-3.6,2.6) .. (-3.5,2.7) 
        .. controls (-1.0,3.0) and ( 1.0,3.0) .. ( 3.3,3.0)
        -- (3.3,4);
  \draw (-3,2) rectangle (3,0.7);
  \node at (0,1.3) {$e_{k-1}$};
\end{tikzpicture}
\raisebox{6ex}{ \, $\xequal{ind. hyp.}$}\begin{tikzpicture}[scale=0.35,line width=0.8pt,>=stealth]

  \draw (-2,4) -- (-2,2);
  \draw ( 0,4) -- ( 0,2);
  \draw ( 2.4,4) -- ( 2.4,2);
  \draw (-2,0.7) -- (-2,-1);
  \draw ( 0,0.7) -- ( 0,-1);
  \draw ( 2.4,0.7) -- ( 2.4,-1);

  \fill (-2,3.5) circle (5pt);
  \fill ( 0,3.5) circle (5pt);
  \fill ( 2.4,3.5) circle (5pt);

  \node at (-1,0) {$\cdots$};
  \node at (-1,2.4) {$\cdots$};
  \draw (-6,-3) rectangle (0.7,-1);
  \node at (-1.8,-2) {$\scalemath{1.1}{\nh_n}$};
  \draw (-2,-1) -- (-2,-1);

  \node at (2,-2.2) {$\scalemath{1.2}{\boxtimes \,  \kb}$};
  \draw (2.4,3) -- (2.4,-1.5);

  \draw (-4,-1)
        .. controls (-3.6,2.6) .. (-3.5,2.7) 
        .. controls (-1.0,3.0) and ( 1.0,3.0) .. ( 3.3,3.0)
        -- (3.3,4);
  \draw (-3,2) rectangle (1.8,0.7);
  \node at (-0.5,1.3) {$e_{k-2}$};
\end{tikzpicture}
\raisebox{6ex}{ $\! \xequal{\eqref{slideq}}$}\begin{tikzpicture}[scale=0.35,line width=0.8pt,>=stealth]

  \draw (-2,4) -- (-2,2);
  \draw ( 0,4) -- ( 0,2);
  \draw (-2,0.7) -- (-2,-1);
  \draw ( 0,0.7) -- ( 0,-1);

  \fill (-2,3.5) circle (5pt);
  \fill ( 0,3.5) circle (5pt);

  \node at (-1,0) {$\cdots$};
  \node at (-1,2.4) {$\cdots$};
  \draw (-6,-3) rectangle (0.7,-1);
  \node at (-1.8,-2) {$\scalemath{1.1}{\nh_n}$};
  \draw (-2,-1) -- (-2,-1);

  \node at (2,-2.2) {$\scalemath{1.2}{\boxtimes \,  \kb}$};
  \draw (3,4) -- (2.4,-1.5);

  \draw (-4,-1)
        .. controls (-3.6,2.6) .. (-3.5,2.7) 
        .. controls (-1.0,3.0) and ( 1.0,3.0) .. ( 1.8,3.0)
        -- (1.8,4);
  \draw (-3,2) rectangle (1.8,0.7);
  \node at (-0.5,1.3) {$e_{k-2}$};
\end{tikzpicture}\raisebox{6ex}{ $\, =$}\begin{tikzpicture}[scale=0.35,line width=0.8pt,>=stealth]

  \draw (0,4) -- (0,2);
  \draw ( -3.5,4) -- ( -3.5,2);
  \draw (0,0.7) -- (0,-1);
  \draw ( -3.5,0.7) -- ( -3.5,-1);

  \node at (-2,-0.3) {$\cdots$};
  \node at (-2,3) {$\cdots$};
  \draw (-6,-3) rectangle (0.7,-1);
  \node at (-1.8,-2) {$\scalemath{1.1}{\nh_n}$};
  \draw (-2,-1) -- (-2,-1);

  \node at (2,-2.2) {$\scalemath{1.2}{\boxtimes \,  \kb}$};
  \draw (2.4,4) -- (2.4,-1.5);

  \draw (-4,2) rectangle (1.5,0.7);
  \node at (-0.5,1.3) {$e_{k-1}$};
\end{tikzpicture}
\]
Here, the last equality is a result of \cref{idemlem}. 
It then follows that there is an exact sequence
\[   
0\to \paren{\TR_{P_{k-1}}}_{n}  \to \paren{\TR_{P_k}\circ \I_\kb}_{n} \to Q\to 0 .
\]
Define a map $ \Phi: \paren{\I_\kb\circ \TR_{P_k}}_{n}\to Q$ as follows, for $v\in \nh_{n+1-k}, w\in \nh_k$:
\[
\Phi(v\otimes \ell\set{e_k} w) := \ell\set{e_k}  v \partial_{n+1-k}\ldots \partial_n \otimes^\prime w.    
\]
We first show that this map is well defined. 
Note $vx_{n-k+1}\otimes  \ell\set{e_k} w=v\otimes x_{n-k+1}  \ell\set{e_k} w=0$ in $\paren{\I_\kb\circ \TR_{P_k}}_{n}$. We compute that
\begin{equation}   
\begin{aligned}
    \Phi(vx_{n-k+1}\otimes  \ell\set{e_k} w)
    &= \ell\set{e_k}  v  x_{n-k+1}\partial_{n+1-k}\ldots \partial_n \otimes^\prime w 
    \\
    &\xequal{\eqref{slideq}} \ell\set{e_k}  v  \partial_{n+1-k} x_{n-k+2}\ldots \partial_n \otimes^\prime w 
    + \ell\set{e_k}  v  \partial_{n+2-k} \ldots \partial_n \otimes^\prime w
    \\
    &\xequal{\eqref{lzeroeq}} \ell\set{e_k}  v  \partial_{n+1-k} x_{n-k+2}\ldots \partial_n \otimes^\prime w.
\end{aligned}
\end{equation}
Here, we first commute the $v\in \nh_{n+1-k}$ past $\partial_{n+2-k}$ before applying \eqref{lzeroeq}. 
We continue moving $x_\bullet$ all the way to the right, and we see that 
\begin{multline*}   
    \Phi(v x_{n-k+1}\otimes  \ell\set{e_k} w)=  \ell\set{e_k}  v  \partial_{n+1-k}\ldots \partial_{n-1} \otimes^\prime w
    \\
    +\ell\set{e_k}  v  \partial_{n+1-k}\ldots \partial_n   x_{n+1}\otimes^\prime w \sim 0 
    \quad \textup{in} \quad Q.
\end{multline*} 
Similarly, for $n\in \nh_{n-k}$, $vn\otimes  \ell\set{e_k} w= v\otimes n \ell\set{e_k} w= v\otimes  \ell\set{e_k} nw$, and we check that
\begin{multline*}  
 \Phi(vn\otimes  \ell\set{e_k} w) 
 = \ell\set{e_k}  v n \partial_{n+1-k}\ldots \partial_n   \otimes^\prime w
 \\
 =   \ell\set{e_k}  v  \partial_{n+1-k}\ldots \partial_n   \otimes^\prime nw
 = \Phi(v\otimes  \ell\set{e_k} nw).
\end{multline*}
Because $\nh_{n-k+1}$ commutes with $\ell\set{e_k}\nh_{n+1}$, 
$\Phi$ is a morphism of $(\nh_{n-k+1}, \nh_{n})$-bimodules. 
It remains to show that $\Phi$ is a bijection. 
Using \cite[Lemma 2.2]{KLMS12} with $a=k$ and $b=1$, 
we see that
\[ 
\set{ \ell\set{e_k}\partial_{r}\ldots \partial_n\otimes^\prime\nh_n\boxtimes \kb  }= \set{ \ell\set{e_k}\partial_{r}\ldots \partial_n\otimes^\prime \ell\set{e_k}\nh_n\boxtimes \kb  } 
\quad \textup{for all}\quad 1\le r\le n+1-k. 
\]
Now, the bijectiveness follows from the vector space decompositions above. 
As $|\partial_i| = -2$, 
the degree of $\partial_{n+1-k}\ldots \partial_n $ is $-2k$, 
and it follows that $Q\cong q^{-2k} \paren{\I_\kb\circ \TR_{P_k}}_{n}$. This completes the proof.
\end{proof}

Our results can be summarized as follows:

\begin{theorem}
The functors $\I_\kb$ and $\TR_{P_k}, k\ge 1$ acting on finite-dimensional modules over the nilhecke algebra, give a categorical realization of quantum differential operators on $\bbA^1_{\Zb[v,v^{-1}]}$. Specifically,
\begin{enua}
    \item Let $v=q^{-2}$. The functors $\I_\kb: \As\-\fmod\to \As\-\fmod$ and $\TR_{P_k}: \As\-\fmod \to \As\-\fmod$ satisfy the following categorified $v-$divided power Weyl algebra relations: 
    \[ 
    \I_\kb\circ \I_\kb\cong \I_{\Ind(\kb\boxtimes \kb)},
    \quad 
    \TR_{P_\ell}\circ \TR_{P_k}\cong \TR_{P_{\ell+k} }^{\oplus \binom{\ell+k}{k}_v}.
    \]
     \[   0\to \paren{\TR_{P_{k-1}}}  \to \paren{\TR_{P_k}\circ \I_\kb} \to v^k  \paren{\I_\kb\circ \TR_{P_k}}\to 0 \qquad \forall k\ge 1 \]
    \item The isomorphism $\cG^f(\As)\cong \Zb[v,v^{-1}][t]$ intertwines the action of $[\I_\kb]$, $[\TR_{P_k}], k\ge 1$ with right multiplication by $t$ and $\partial^{(k)}$, respectively. 
\end{enua}
\end{theorem}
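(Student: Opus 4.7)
The plan is to assemble this theorem from results already proved: part (a) reduces to specializing the general isomorphisms \eqref{indeq}--\eqref{reseq} and the preceding short exact sequence to the nilHecke data, while part (b) comes from decategorifying these on $\cG^f(\As) \cong \Zb[v^{\pm 1}][t]$.

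First I would handle (a). The isomorphism $\I_\kb \circ \I_\kb \cong \I_{\ind(\kb \boxtimes \kb)}$ is \eqref{indeq} with $M=N=\kb$. Applying \eqref{reseq} gives $\TR_{P_\ell} \circ \TR_{P_k} \cong \TR_{\ind(P_\ell \boxtimes P_k)}$, and combining this with the previously established $\ind(P_\ell \boxtimes P_k) \cong \tbinom{\ell+k}{k}_{q^{-2}} P_{\ell+k}$ together with the substitution $v = q^{-2}$ yields the stated multiplicity. The short exact sequence is a direct functorial translation of the bimodule short exact sequence of the previous theorem, with the grading shift $q^{-2k}$ rewritten as $v^k$.

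For (b), the action of $[\I_\kb]$ is immediate from the lemma $[\I_M](N) = [\ind](N\boxtimes M)$ together with the isomorphism $\ind(L_n\boxtimes L_1)\cong L_{n+1}$: one reads off $[\I_\kb]\colon [L_n]\mapsto [L_{n+1}]$, which corresponds to right multiplication by $t$. For $[\TR_{P_k}]$ I would invoke \cref{actionlem} to identify its decategorification with the adjoint operator $r^*_{[P_k]}$, and then test against the basis $\{[P_m]\}$ through the pairing:
\[
\abrac{[P_m],\,[\TR_{P_k}][L_n]} \;=\; \abrac{\ind(P_m\boxtimes P_k),\,[L_n]} \;=\; \tbinom{m+k}{k}_{v}\,\abrac{[P_{m+k}],\,[L_n]}.
\]
The duality of \cref{perfectlem} forces $m = n-k$, so $[\TR_{P_k}][L_n] = \tbinom{n}{k}_v [L_{n-k}]$, which is precisely the action of $\partial^{(k)}$ on $t^n$.

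The main subtlety will be pinning down the normalization of the duality pairing: I need $\abrac{[P_n],[L_n]}$ to be independent of $n$ (up to a global grading shift) so that the coefficient $\tbinom{n}{k}_v$ emerges cleanly with no stray powers of $q$. This reduces to computing $\gdim_\kb \psi(e_n) L_n$, where $\psi(e_n) L_n$ should be the one-dimensional top graded component of $L_n$ spanned by a Vandermonde-type element. Alternatively, once the first commutation relation in (a) is decategorified to $[\TR_{P_k}] t - v^k t [\TR_{P_k}] = [\TR_{P_{k-1}}]$, one may bypass the normalization issue by showing inductively on $n$ that this recursion, together with $[\TR_{P_0}] = \Id$, uniquely characterizes the action as $\partial^{(k)}$ on $\Zb[v^{\pm 1}][t]$. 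Either route then combines with part (a) to give the full categorical realization of $\cD \curvearrowright \Zb[v^{\pm 1}][x]$.
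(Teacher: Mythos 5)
Your assembly is essentially the paper's own argument: the theorem is stated there as a summary, and both parts are obtained exactly as you describe --- (a) by specializing \eqref{indeq} and \eqref{reseq} together with $\ind(P_\ell\boxtimes P_k)\cong\tbinom{\ell+k}{k}_{q^{-2}}P_{\ell+k}$ and the short exact sequence of the preceding theorem (with $q^{-2k}=v^k$), and (b) by decategorifying on $\cG^f(\As)\cong\Zb[v^{\pm1}][t]$ via \cref{actionlem}. One substantive caveat concerns the subtlety you flagged in (b): the hoped-for normalization fails if taken literally. Since $P_n=\nh_n\psi(e_n)$ and $\psi^2=\id$, the pairing of \cref{perfectlem} gives $\abrac{[P_n],[L_n]}=\gdim_\kb e_nL_n$, and $e_nL_n$ is spanned by the image of the staircase monomial $x_1^{n-1}\cdots x_{n-1}$, i.e.\ the one-dimensional top component of $L_n$ in degree $n(n-1)$; so $\abrac{[P_n],[L_n]}=q^{n(n-1)}$ genuinely depends on $n$, and your pairing route produces a stray factor $q^{k(2n-k-1)}$ unless the classes $[L_n]$ (or the $[P_n]$) are renormalized by such powers of $q$ --- the same grading-shift suppression that is already implicit in the paper's statement $\ind(L_n\boxtimes L_m)\cong L_{n+m}$. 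Hence your fallback is the route to take, not an optional alternative: using $[\TR_{P_0}]=\Id$, $[\TR_{P_k}][L_0]=0$ for $k\ge 1$, the identification of $[\I_\kb]$ with right multiplication by $t$, and the decategorified sequence $[\TR_{P_k}]\circ[\I_\kb]=[\TR_{P_{k-1}}]+v^k\,[\I_\kb]\circ[\TR_{P_k}]$, induction together with the $v$-Pascal identity $\tbinom{n}{k}_v=\tbinom{n-1}{k-1}_v+v^k\tbinom{n-1}{k}_v$ pins down the action as $\partial^{(k)}$. With that adjustment (or with the renormalization made explicit), your proof is complete and coincides with the paper's intended derivation.
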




\printbibliography
\end{document}